\newtheorem{satz}{Theorem}[section]
\newtheorem{lemma}{Lemma}[section]
\newtheorem{bemerk1}{Remark}[section]
\newtheorem{bei}{Example}[section]
\newtheorem{korollar}{Corollary}[section]
\newtheorem{corollary}{Corollary}[section]
\newtheorem{proposition}{Proposition}[section]
\newcommand{\iR}{\mathbb{R}}
\newcommand{\iN}{\mathbb{N}}
\newcommand{\iZ}{\mathbb{Z}}
\newcommand{\R}{\mathbb{R}}
\newcommand{\N}{\mathbb{N}}
\newcommand{\cL}{\mathcal{L}}
\newcommand{\cA}{\mathcal{A}}
\begin{document}

\title{Curvature-dimension inequalities for non-local operators in the discrete setting}
\date{\today}
\author{Adrian Spener}
\email{adrian.spener@uni-ulm.de}
\author{Frederic Weber}
\email{frederic.weber@uni-ulm.de}
\author{Rico Zacher$^*$}
\thanks{$^*$Corresponding author}
\email[Corresponding author:]{rico.zacher@uni-ulm.de}
\address[Adrian Spener, Frederic Weber, Rico Zacher]{Institut f\"ur Angewandte Analysis, Universit\"at Ulm, Helmholtzstra\ss{}e 18, 89081 Ulm, Germany.}


\begin{abstract}
 We study Bakry-\'Emery curvature-dimension inequalities for non-local operators on the one-dimensional lattice and prove that operators with finite second moment have finite dimension. Moreover, we show that a class of operators related to the fractional Laplacian fails to have finite dimension and establish both positive and negative results for operators with sparsely supported kernels. Furthermore, a large class of operators is shown to have no positive curvature. The results correspond to CD inequalities on locally infinite graphs.
\end{abstract}

\maketitle

\bigskip
\noindent \textbf{Keywords:} Gamma Calculus, Curvature-Dimension Inequality, Bakry-\'Emery Inequality, Non-local Operator, Fractional Laplacian, Markov Chain, Infinite Graphs.
 
 \noindent \textbf{MSC(2010)}: 47D07 (primary), 05C63, 60G22, 26A33 (secondary).

\section{Introduction and main results}
The main purpose of this paper is to study curvature-dimension (CD) inequalities for  
non-local operators $\cL$ on the lattice $\iZ$ of the form
\begin{equation} \label{genlaplacedef}
\cL v(x)=\,\sum_{j\in \iZ} k(j) \big(v(x+j)-v(x)\big),\quad x\in \iZ,
\end{equation}
with a (nontrivial) kernel $k$ which is nonnegative, integrable and symmetric, that is
\begin{itemize}
\item[(K1)] $k:\,\iZ\to [0,\infty)$, $\sum_{j\in \iZ}k(j)<\infty$ and $k(-j)=k(j)$ for all $j\in \iN$.
\end{itemize}
Observe that the value of $k$ at $0$ does not play a role in the definition
of $\cL$. It is convenient to assume that 
\begin{itemize}
\item[(K2)] $k(0)=0$. 
\end{itemize}
An important example, which will be investigated in this paper, is given by
\begin{equation} \label{powerkerneldef}
k(j)=\,\frac{c}{|j|^{1+\beta}},\quad j\in \iZ\setminus \{0\},
\end{equation}
where $c,\beta>0$.

Curvature-dimension inequalities (or conditions) play a central role in the study of functional inequalities associated to Markov
semigroups and operators. Important examples of such functional inequalities are the Poincar\'e or spectral gap inequality,
the logarithmic Sobolev inequality and the Sobolev inequality, which, among others, allow to derive various estimates of 
solutions to related evolution equations, e.g.\ Harnack inequalities or bounds which imply the exponentially fast trend to an equilibrium. The special feature of CD-inequalities
is that they provide a very useful link to the geometric properties (like dimension and curvature) of the underlying structure (\cite{MR3155209}). For this reason they also constitute an important tool in geometric analysis 
(\cite{MR2962229}).

There are several different notions of CD-inequalities. Here we use the original one, which goes back to Bakry and \'Emery and
is formulated in terms of the carr\'e du champ operator $\Gamma$ and the iterated carr\'e du champ operator $\Gamma_2$
associated with the infinitesimal generator $\cL$ of a Markov semigroup, see \cite{MR889476}. Another powerful approach is based on the theory of optimal transport and displacement convexity inequalities (\cite{MR2237206,MR2237207,MR2459454,MR2480619}). For Riemannian manifolds it provides, like the Bakry-\'Emery calculus, an equivalent definition of Ricci curvature lower bounds. 

Let $\cL$ be the generator of a symmetric Markov semigroup with invariant reversible measure $\mu$ on the state space $E$. The bilinear operators $\Gamma$ and $\Gamma_2$ are defined by
\begin{align*}
\Gamma(u,v)& = \frac{1}{2}\left( \cL(uv) - u \cL v - v \cL u\right),\\
\Gamma_2(u,v)& = \frac{1}{2} \left( \cL \Gamma (u,v) - \Gamma (u, \cL v) - \Gamma (\cL u,v)\right)
\end{align*}
on a suitable algebra $\cA$ of real-valued functions $u,v$ defined on the underlying state space $E$. 
Furthermore, one sets
\begin{align*}
 \Gamma(u) & := \Gamma(u,u) = \frac{1}{2} \cL  (u^2) - u \cL u,\\
 \Gamma_2(u)& := \Gamma_2(u,u) = \frac{1}{2} \cL \Gamma (u) - \Gamma (u, \cL u).
\end{align*}
 
Let $\kappa\in \iR$ and $d \in (0, \infty]$. We say that $\cL$ satisfies the \emph{Bakry-\'Emery curvature-dimension inequality} $CD(\kappa, d)$ with dimension $d$ and curvature $\kappa$ (lower bound) at $x\in E$ if 
\begin{equation} \label{CDBed}
 \Gamma_2(u)(x) \geq \frac{1}{d} \big((\cL u)(x)\big)^2 + \kappa \Gamma(u)(x)
\end{equation}
for all functions $u:\,E\to \iR$ in a sufficiently rich class $\cA$ of functions. We further say that $\cL$ satisfies the
$CD(\kappa,d)$-inequality, if \eqref{CDBed} holds $\mu$-almost everywhere for all $u\in \cA$, cf.\ \cite[Sect. 1.16]{MR3155209}.

{As an illustrating example we let $E = (M,g)$ be a Riemannian manifold with canonical Riemannian measure $\mu_g$ and $\mathcal{L} = \Delta_g$ the Laplace-Beltrami operator. Using the Bochner-Lichnerowicz formula one obtains that $CD(\kappa, d)$ is equivalent to $\operatorname{Ric}_g(x) \geq \kappa g(x)$ and $\dim M \leq d$.}

In our case, that is, the operator $\cL$ is given by \eqref{genlaplacedef}, we have a countable Markov chain with state space $E=\iZ$ and
$\cL$ is a Markov generator, which can be also written as
\[
\cL u(x)=\sum_{y\in \iZ}l(x,y) u(y),\quad x\in \iZ,
\] 
where
\[
l(x,y)=|k|_1\big (p(x,y)-\delta(x,y)\big),\quad p(x,y)=\frac{1}{|k|_1}\,k(x-y),\quad x,y\in \iZ,
\]
and $\delta(x,y)=1$ if $x=y$ and $\delta(x,y)=0$ otherwise. The infinite matrix $(p(x,y))_{(x,y)\in \iZ^2}$ represents 
the transition probabilities of the Markov chain; $p(x,y)$ is the probability to jump from $x$ to $y$ in the next time step. 
The counting measure on $\iZ$ plays the role of the invariant reversible measure $\mu$. A possible choice for the algebra $\cA$ is the space of all bounded functions $l_\infty(\iZ)$. A straight-forward computation shows that
\begin{align}
\Gamma(u)(x) & =\,\frac{1}{2}\,\sum_{j\in \iZ} k(j) \big( u(x+j)-u(x)\big)^2 ,\quad x\in \iZ,\label{Gamma1Formel}\\
\Gamma_2(u)(x) & =\,\frac{1}{4}\,\sum_{j,l\in \iZ} k(j)k(l) \big(u(x+j+l)-u(x+j)-u(x+l)+u(x)\big)^2,\quad x\in \iZ,
\label{Gamma2Formel}
\end{align}
in particular $\Gamma_2(u)\ge 0$, which implies that the $CD(0,\infty)$-inequality is always true. 

The main objective of this paper is to analyse if and under which conditions the operator $\cL$ from  
\eqref{genlaplacedef} satisfies $CD(\kappa,d)$, the main focus lying on $CD(0,d)$-conditions with finite 
dimension $d>0$. Throughout this paper, we assume that the kernel $k$ in \eqref{genlaplacedef} is subject to the conditions
(K1) and (K2) from above and that $|k|_1>0$. We are especially interested in kernels with {\em unbounded support}
as it is the case, e.g., for the algebraic (or power type) kernel $k$ given by \eqref{powerkerneldef}. In this situation,
arbitrary long jumps are possible. The graphs underlying the Markov chain are {\em not locally finite}, in contrast
to related known results in the literature. 

{Let us explain the connection to graph theory in more detail. Consider the undirected, weighted graph $\mathcal G$ given by the vertices $V = \iZ$ with edge weight $\omega(x,y) = k(x-y)$, $x,y \in V$. Setting the weight on the vertices to be constant we obtain \eqref{genlaplacedef} for the graph Laplacian on $\mathcal{G}$. Note that $\mathcal G$ is locally infinite if and only if $k$ has unbounded support.
Curvature-dimension inequalites in the discrete setting, in particular for (locally) finite graphs are studied intensively (\cite{MR3727131,MR3592766,MR3776357, 2018arXiv180710181K}). 
Various examples of CD-inequalities in the sense of \eqref{CDBed} for finite graphs are given in \cite{MR3492631, MR3611318}. Some results on locally finite graphs from \cite{MR2644381, MR3164168} can be adapted to the locally infinite case (see Proposition \ref{prop:CD(-k,2)}). Related notions of curvature-dimension conditions on graphs, e.g.\ so-called exponential
curvature-dimension inequalities, are examined in 
\cite{MR2484937}, \cite{2014arXiv1412.3340M}, \cite{MR3316971} and \cite{2017arXiv170104807D}.
} Curvature-dimension inequalities in the context of optimal transport in the discrete setting have been studied in \cite{MR2824578}, \cite{MR2989449}, \cite{MR3224294}, \cite{MR3385639}, \cite{MR3706773}, \cite{MR3782987}.

We now describe the main results of this paper. The first basic observation is that for any kernel with {\em finite support}
the corresponding operator $\cL$ satisfies $CD(0,2N)$, where $2N$ equals the cardinality of the support, see Theorem 
\ref{finite support}. 
We also prove that the constant $2N$ in the CD-inequality is in general the best one can get.

Turning to the case of unbounded support, we are able to show that for the {\em power type kernel} $k(j)=c|j|^{-(1+\beta)}$
with $c,\beta>0$ the value $\beta=2$ is a critical case. For all $\beta\in (0,2)$ the $CD(0,d)$-condition fails to hold for
all finite $d>0$ (see Theorem \ref{betalessthan2}), whereas for any $\beta\in (2,\infty)$ there exists a finite $d>0$ such that $CD(0,d)$ is valid, cf.\ Theorem \ref{posresult}. In the proof of Theorem \ref{betalessthan2}, we consider the family
of {\em unbounded} functions $u_\varepsilon(j)=|j|^{\beta-\varepsilon}$ ($\varepsilon>0$) and prove that 
$\Gamma_2(u_\varepsilon)(0)/(\cL(u_\varepsilon)(0))^2 \to 0$ as $\varepsilon\to 0$. One may ask whether a
$CD(0,d)$-condition with $d\in (0,\infty)$ is still true on a smaller class of admissible functions {such as} $l_\infty(\iZ)$ or
{
}the set of compactly supported functions. This is not the case as we show by means of an appropriate family of functions with compact support, see Theorem \ref{theo:approximation}. The basic idea of the construction of these functions is 
to use the family $(u_\varepsilon)_{\varepsilon>0}$ from before and carefully chosen cut-off functions, a crude cut-off
of $(u_\varepsilon)_{\varepsilon>0}$ does not seem to work.
The proof is rather technical as the estimate
of the $\Gamma_2$-term requires distinguishing of several cases due to the double series.

An important consequence of the negative result for $\beta<2$ is that for all {\em fractional powers} $\cL=-(-\Delta)^{\beta/2}$ of the {\em discrete Laplacian} $\Delta$ on $\iZ$ with $\beta\in (0,2)$ the $CD(0,d)$-inequality fails to hold as well for all finite $d>0$. Here, the
discrete Laplacian (which satisfies $CD(0,2)$, {see Theorem \ref{finite support}}) is given by
\begin{equation}
\Delta u(x)=u(x+1)-2 u(x)+u(x-1),\quad x\in \iZ, \label{eq:DiscreteLaplacian}
 \end{equation}
and the fractional powers of the discrete Laplacian can be defined by means of the semigroup method (see {\cite[(1.3)]{MR3787555}})
as
\[
-(-\Delta)^{\frac{\beta}{2}}u(x)=\frac{1}{\Gamma(-\frac{\beta}{2})}\int_0^\infty \big(u-e^{t\Delta}u\big)(x)\frac{dt}
{t^{1+\frac{\beta}{2}}},\quad x\in \iZ.
\]

In a very recent work {\cite{Gegenbeispiel}}, we are able to adapt the discrete estimates and the construction of the counterexamples from the case $\beta<2$ to the {\em continuous case}. Employing additional techniques to extend the results to the multi-dimensional case
we can show that the {\em fractional Laplacian on} $\iR^N$ with any $N\in \iN$ fails to satisfy the $CD(0,d)$-condition for all finite $d>0$,
even in the case where only compactly supported $C^\infty$-functions are admissible.

The main positive result, Theorem \ref{posresult}, is not only formulated for purely algebraic kernels with $\beta>2$ but
for a much larger class of sufficiently fast decaying kernels. Besides monotonicity on $\iN$ the central assumption is that
the kernel $k$ has a {\em finite second moment}, that is,
\[
\sum_{j\in \iN} k(j)j^2<\infty.
\]
{This class also
includes, for example,} all kernels of the form
\begin{equation} \label{kbeispiel}
k(j)=c \,\frac{e^{- \delta |j|^\alpha}}{|j|^\gamma},\quad j\in \iZ\setminus \{0\},
\end{equation}
where $c,\alpha,\delta>0$ and $\gamma\ge 0$, in particular exponential kernels. We also show that
the statement of Theorem \ref{posresult} remains true if the kernel $k$ has a finite second moment and is merely
assumed to be non-increasing for all 
sufficiently large $j\in \iN$, see Remark \ref{extensionposresult}. This allows to cover also kernels, e.g.,
of the
form \eqref{kbeispiel} where $c,\alpha,\delta>0$ and $\gamma< 0$.

It is still an interesting open problem whether $CD(0,d)$ holds for some finite $d>0$ in case of the algebraic kernel with
critical value $\beta=2$, that is, $k(j)=c|j|^{-3}$, $j\in \iZ\setminus\{0\}$, with $c>0$. 
{We can
immediately deduce from Theorem \ref{posresult} that} this is indeed the case if the kernel has an
additional logarithmic factor, more precisely, if $k$ is of the form 
\[
k(j)=\,\frac{c}{|j|^{3}[1+\log(|j|)]^{1+\alpha}},\quad j\in \iZ\setminus \{0\},
\]
with $c,\alpha>0$.

Another striking phenomenon is that all kernels with unbounded support, even those which are bounded above by an exponential kernel, fail
the $CD(0,d)$-condition for all finite $d>0$ if the support enjoys a certain number theoretic property,
which in some sense means that {\em the support is sufficiently thin}.
The latter holds in particular if the gaps between the (positive) elements of the support grow faster than the sequence of the powers of three, see Theorem \ref{thinsupport}. Somewhat surprisingly, if the support is precisely the set $\{\pm 3^l:\,l\in \iN\}$ and the kernel
decays exponentially then $CD(0,d)$ is still valid for some finite $d>0$ as we show in Theorem \ref{powersof3}.
Thus the latter situation constitutes an extreme case. 

Having summarized the main results on the $CD(0,d)$-condition, we now come to the question whether a {\em positive
curvature} bound is possible, that is $CD(\kappa,\infty)$ for some $\kappa>0$. In view of the representation formulas \eqref{Gamma1Formel} and \eqref{Gamma2Formel}
this cannot be expected, since for the function $u(j)=j$ one obtains (at least formally) that $\Gamma_2(u)(0)=0$ and
$\Gamma(u)(0)=\sum_{j\in \iN} k(j) j^2\in (0,\infty]$. For kernels with finite second moment, this formal
argument becomes rigorous as $u(j)=j$ is an admissible function in the sense that $\Gamma(u)$ is finite. In the other case,
that is, $k$ has no finite second moment, we are able to give a rigorous argument assuming that $k$ is non-increasing
on $\iN$. By means of suitable approximation, we can here even restrict the class of admissible functions to compactly
supported functions $u$, see Theorem \ref{theo:nocurvature}. Again, the argument is rather technical due to the necessity of distinguishing several cases
when estimating the $\Gamma_2$-term. The general case (without monotonicity assumption on $k$) remains open.

Note, however, that $CD(\kappa,\infty)$ with some $\kappa>0$ implies $CD(0,d)$ for some finite $d>0$, by 
H\"older's inequality (see also Lemma \ref{lem:NoPositiveCurvature} below). Consequently, only kernels with $CD(0,d)$ for
some $d\in (0,\infty)$ come into consideration with regard to a possible positive curvature bound. Interestingly, all such kernels we know (compare the above mentioned results) do have a finite second moment! {
This raises the question }whether a finite second 
moment is necessary for $CD(0,d)$ with some $d\in (0,\infty)$. This open question brings us finally back to the open problem about the algebraic kernel in the critical case.

{The article is organised as follows. In the next section we present some preliminary results and treat the case of bounded support of $k$. In Section \ref{sec:betaKleiner2} we show that algebraic kernels with $\beta < 2$ do not satisfy $CD(0,d)$ with finite $d$ by constructing a compactly supported counterexample.
The positive result for kernels with finite second moment, which in particular applies to algebraic kernels with $\beta > 2$, is shown in the subsequent section. Section \ref{Sect:Sparse} is devoted to the combinatorial analysis of kernels with sparse support, and in the last section we show that positive curvature is not possible for monotone kernels.
}

\subsection*{Acknowledgement} 
Adrian Spener and Rico Zacher are supported by the DFG (project number 355354916, GZ ZA 547/4-1). The authors thank the anonymous referee for the valuable comments, in particular the interesting observation described in Remark \ref{rem:infiniteproduct}.

\section{Basic properties and kernels with finite support}

In this section we collect some preliminary results and show that finitely supported kernels always have finite dimension,
that is $CD(0,d)$ holds for some finite $d>0$. Moreover, we obtain some $CD(\kappa,d)$-inequalities with finite $d>0$ but negative curvature $\kappa$ by adapting previous results on graphs. Recall that we always assume that the kernel $k$ is subject to the conditions (K1) and (K2) with $|k|_1>0$. 

In order to describe the class of admissible functions, we introduce, for $1\le p<\infty$ the weighted $l_p$-spaces
\[
l_{p,k}(\iZ):=\{v:\iZ\to \iR:\, \sum_{j\in \iZ} k(j)|v(j)|^p<\infty\},
\]
endowed with the canonical norm. In what follows we use the following convention with respect to the admissible class of functions for the  
$CD(\kappa,d)$-inequality at $x\in \iZ$. If not stated otherwise, a function $u:\iZ\to \iR$ is admissible for $CD(\kappa,d)$
at $x$
if $u(\cdot+x)\in l_{1,k}(\iZ)$ in case $\kappa=0$ and otherwise $u(\cdot+x)\in l_{2,k}(\iZ)$. This ensures finiteness of all the terms appearing on the right of \eqref{CDBed} when used in the form given by \eqref{Gamma1Formel} and \eqref{Gamma2Formel}, respectively.
Observe that $l_{2,k}(\iZ)\hookrightarrow l_{1,k}(\iZ)$, since $k\in l_1(\iZ)$ and by H\"older's inequality. Bounded functions
are always admissible. Note that, by Lebesgue's theorem, $CD(\kappa, d)$ holds for all bounded functions provided this condition is satisfied for all compactly supported functions, but it seems to be not clear whether the latter implies $CD(\kappa,d)$ for all admissible functions.

The following auxiliary result will be frequently used in the paper.
\begin{lemma}\label{lem:symmetry}
When studying the validity of the $CD(\kappa,d)$-condition at $x\in \iZ$, we may always assume without loss of generality that $x=0$ and that the admissible functions satisfy $u(0) = 0$.\\
Moreover, if $CD(-\kappa,d)$ holds for all \emph{symmetric} $u$, where  $\kappa \geq 0$ and $d\in (0,\infty)$, then $CD(-\kappa,d)$ holds for all $u$.
\end{lemma}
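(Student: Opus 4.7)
The plan for the first statement is to exploit two elementary invariances of the setup. First, translation invariance: since $\mathcal{L}$ is defined by convolution with $k$ and the kernel depends only on $j$, I would verify directly from \eqref{Gamma1Formel} and \eqref{Gamma2Formel} that $(\mathcal{L}u)(x) = \mathcal{L}(u(\cdot+x))(0)$, and that the same relation holds for $\Gamma$ and $\Gamma_2$. Hence checking $CD(\kappa,d)$ at $x$ for $u$ is literally the same as checking it at $0$ for the shifted function $u(\cdot+x)$, and the admissibility class transforms accordingly. Second, $\mathcal{L}$ annihilates constants, and both $\Gamma$ and $\Gamma_2$ are insensitive to adding a constant since they only involve differences of values of $u$; replacing $u$ by $u - u(0)$ therefore changes nothing in \eqref{CDBed} while enforcing $u(0) = 0$. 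One brief check is that this shift preserves the admissibility class $l_{p,k}(\iZ)$, which is immediate because $k \in l_1(\iZ)$ forces all constants to lie in $l_{p,k}(\iZ)$.

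For the second statement I will, with $x=0$ and $u(0)=0$ fixed by part one, decompose $u = u_s + u_a$ into its even and odd parts $u_s(j) = \tfrac{1}{2}(u(j)+u(-j))$ and $u_a(j) = \tfrac{1}{2}(u(j)-u(-j))$. Both vanish at $0$ and both remain admissible, since $|u_s|, |u_a| \le \tfrac{1}{2}(|u(\cdot)| + |u(-\cdot)|)$ and $k$ itself is symmetric. The crux is the observation that, evaluated at $0$, the three expressions $\mathcal{L}u(0)$, $\Gamma(u)(0)$, $\Gamma_2(u)(0)$ split along this decomposition because the cross terms between $u_s$ and $u_a$ drop out by parity. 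For $\mathcal{L}u(0) = \sum_j k(j)u(j)$ the antisymmetric part contributes zero because $k(-j)=k(j)$; for $\Gamma(u)(0) = \tfrac{1}{2}\sum_j k(j)u(j)^2$ the cross term $\sum_j k(j)u_s(j)u_a(j)$ vanishes by the same argument, yielding $\Gamma(u)(0) = \Gamma(u_s)(0) + \Gamma(u_a)(0)$.

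The analogous fact for $\Gamma_2$ will require slightly more care. Introducing the second difference $D_\sigma(j,l) := u_\sigma(j+l) - u_\sigma(j) - u_\sigma(l)$ for $\sigma \in \{s,a\}$, I would verify the parity identities $D_s(-j,-l) = D_s(j,l)$ and $D_a(-j,-l) = -D_a(j,l)$, both relying on $u_\sigma(0)=0$. Expanding $(D_s+D_a)^2$, the cross term $2 D_s(j,l) D_a(j,l)$ is then antisymmetric under the joint substitution $(j,l)\mapsto(-j,-l)$, while the weight $k(j)k(l)$ is symmetric under the same substitution; therefore the cross term integrates to zero and $\Gamma_2(u)(0) = \Gamma_2(u_s)(0) + \Gamma_2(u_a)(0)$.

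Combining, I would apply the hypothesis $CD(-\kappa,d)$ to the admissible symmetric function $u_s$, while for the antisymmetric part the trivial bound $\Gamma_2(u_a)(0) \ge 0 \ge -\kappa\,\Gamma(u_a)(0)$ suffices, using $\kappa \ge 0$ and the fact that $\mathcal{L}u_a(0) = 0$ so no $\tfrac{1}{d}(\mathcal{L}u_a(0))^2$-contribution is needed. Adding the two inequalities and using the additivity identities above recovers $CD(-\kappa,d)$ for $u$. The only step that is not pure bookkeeping is the parity argument for the $\Gamma_2$-term, with its double sum and three-term second difference; this is where I expect to have to be most careful, though once the symmetry of $D_s$ and antisymmetry of $D_a$ are correctly identified everything falls out from \eqref{Gamma1Formel} and \eqref{Gamma2Formel}.
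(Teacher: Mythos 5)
Your proof is correct and takes a genuinely different route from the paper's for the second assertion. The paper works directly with the symmetrization $\tilde u(j) = \tfrac{1}{2}(u(j)+u(-j))$ and derives the \emph{one-sided} inequalities $\Gamma_2(\tilde u)(0) \le \Gamma_2(u)(0)$ and $\Gamma(\tilde u)(0) \le \Gamma(u)(0)$ via the elementary bound $(a+b)^2 \le 2a^2 + 2b^2$ combined with the symmetry of $k$, then applies the hypothesis to $\tilde u$ and uses $\mathcal{L}\tilde u(0) = \mathcal{L}u(0)$. You instead decompose $u = u_s + u_a$ into even and odd parts and show the \emph{exact additivity} identities $\Gamma_2(u)(0) = \Gamma_2(u_s)(0) + \Gamma_2(u_a)(0)$, $\Gamma(u)(0) = \Gamma(u_s)(0) + \Gamma(u_a)(0)$, $\mathcal{L}u(0) = \mathcal{L}u_s(0)$, with the cross terms dying by parity under the involution $(j,l) \mapsto (-j,-l)$; the antisymmetric part is then disposed of via $\Gamma_2(u_a)(0) \ge 0 \ge -\kappa\Gamma(u_a)(0)$. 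Your additivity identity is a sharper structural fact than the paper's inequality (and immediately implies it, since $\Gamma_2(u_a) \ge 0$), at the cost of a slightly longer parity bookkeeping in the double sum; the paper's route reaches the same conclusion with less computation but records less information. Both are elementary and both are correct; your handling of admissibility of $u_s, u_a$ in $l_{2,k}(\iZ)$ is also right, using that $k$ is symmetric and $l_{2,k}(\iZ)$ is a vector space.
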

\begin{proof}
For the first assertion we let $x \in \iZ$ and $u$ be an arbitrary admissible function. Setting $\tilde u(y) := u(y+x) - u(x)$ 
we find that $\tilde u$ is admissible at $0$ and 
 \begin{align*}
  4\Gamma_2(u)(x) &=  \sum_{j,l\in \iZ} k(j)k(l) \left(u(x+j+l)-u(x+j) - u(x+l) + u(x)\right)^2\\
  &=\sum_{j,l\in \iZ} k(j)k(l) \left( [u(x+j+l)-u(x)] - [u(x+j)-u(x)] - [u(x+l)-u(x)]\right)^2\\
  &=4\Gamma_2(\tilde u )(0).
 \end{align*}
 Similarly we have
$\mathcal{L} u(x) = \mathcal{L} \tilde u(0)$ and $\Gamma(u)(x) = \Gamma(\tilde u)(0)$.

To show the second part we may apply the first part and assume w.l.o.g.\ that $CD(-\kappa,d)$ holds at $0$ for all symmetric functions and $u$ is not symmetric with $u(0)=0$. We find for the symmetric function $ \tilde u(x) = \frac{1}{2}(u(x) + u(-x))$ that $\Gamma_2(\tilde u)(0) \leq \Gamma_2({u})(0)$, since by the inequality $(a+b)^2\le 2 a^2+2b^2$, $a,b\in \iR$, we have
\begin{align*}
(\tilde u(l+j)-\tilde u(l)-\tilde u(j))^2 &=\frac{1}{4}((u(l+j) -u(l)-u(j)) + (u(-l-j)-u(-l)-u(-j))]^2 \\
& \leq \frac{1}{2}[((u(l+j) -u(l)-u(j))^2 + (u(-l-j)-u(-l)-u(-j))^2],
\end{align*}
and hence
\[
\Gamma_2(\tilde{u})(0)\le \frac{1}{2}\,\big(\Gamma_2(u)(0)+\Gamma_2(u(-\cdot))(0)\big)=\Gamma_2(u)(0)
\]
by symmetry of $k$.
Moreover we have $\mathcal{L} u (0) = \mathcal{L} \tilde u(0)$. 
By the same reasoning as before we have $\Gamma(\tilde u)(0) \leq \Gamma(u)(0)$, hence $-\kappa\Gamma(\tilde u)(0) \geq -\kappa\Gamma(u)(0)$ and whence by the assumption
\[
\Gamma_2( u)(0) \geq \Gamma_2(\tilde u)(0) \geq -\kappa \Gamma(\tilde u)(0) + \frac{1}{d}(\mathcal{L} \tilde  u(0))^2 \geq -\kappa \Gamma( u)(0) + \frac{1}{d}( \mathcal{L} u(0))^2 .\qedhere 
\]\end{proof}

Under symmetry of $u$ and assuming $u(0)=0$ we have (by taking $l=-j$ in formula \eqref{Gamma2Formel} for $\Gamma_2$) the basic estimate
\begin{equation} \label{basicGamma2Est}
\Gamma_2(u)(0)\ge \frac{1}{4}\,\sum_{j\in \iZ} k(j)^2 \big(u(0)-u(j)-u(-j)\big)^2=2\sum_{j=1}^\infty k(j)^2 u(j)^2.
 \end{equation}
 Note that \eqref{basicGamma2Est} is not sufficient to show a $CD(0,d)$-inequality with finite dimension $d$ if $k$ has unbounded support. Indeed, letting $u(j)= \frac{1}{k(j)}$ for $j \in \operatorname{supp} k$ with $|j| \leq N$ and zero otherwise we obtain
 \[\sum_{j=1}^\infty k(j)^2 u(j)^2 = \#\{ j \in \operatorname{supp} k \,:\,1\le  j \leq N\} 
 \]
whereas the right hand side satisfies
\[
 (\mathcal{L}u(0))^2 = \left(2 \sum_{j=1}^\infty k(j) u(j)\right)^2= 4(\#\{ j \in \operatorname{supp} k \,:\,1\le  j \leq N\})^2 
\]
and grows faster as $N\to \infty$. This observation is a basic ingredient for the results in Section \ref{Sect:Sparse}. In the case of a finitely supported $k$ the lower bound \eqref{basicGamma2Est} is enough to obtain the following important result.
\begin{satz} \label{finite support}
If the kernel $k$ has finite support, i.e. $ \#\operatorname{supp} k =2N$ for some $N \in \N$, then the operator $\cL$ satisfies $CD(0,2N)$. 
\end{satz}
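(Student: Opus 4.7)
The plan is to reduce the problem to symmetric functions vanishing at $0$ via Lemma \ref{lem:symmetry}, apply the basic lower bound \eqref{basicGamma2Est} on $\Gamma_2$, and then close the estimate by a single application of the Cauchy--Schwarz inequality, exploiting the fact that the effective sum only runs over the $N$ positive lattice points in $\operatorname{supp} k$.

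More concretely, fix $x \in \iZ$ and an admissible function $u$. By the first part of Lemma \ref{lem:symmetry} we may translate and assume $x = 0$ and $u(0) = 0$. Since we are aiming for $CD(0,2N)$, the curvature parameter is $-\kappa$ with $\kappa = 0$, so the second part of Lemma \ref{lem:symmetry} allows us to further restrict attention to symmetric $u$ with $u(0) = 0$. Under these reductions, \eqref{basicGamma2Est} gives
\[
\Gamma_2(u)(0) \geq 2 \sum_{j=1}^{\infty} k(j)^2 u(j)^2,
\]
while symmetry of $k$ and of $u$ (together with $u(0)=0$) yields
\[
\cL u(0) = \sum_{j\in\iZ} k(j) u(j) = 2 \sum_{j=1}^{\infty} k(j) u(j).
\]

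The assumption that $\#\operatorname{supp} k = 2N$, combined with the symmetry $k(-j) = k(j)$ and $k(0)=0$, forces the set $S := \{j \in \iN : k(j) > 0\}$ to have cardinality exactly $N$. Hence by Cauchy--Schwarz,
\[
\Bigl(\sum_{j=1}^{\infty} k(j) u(j)\Bigr)^2 = \Bigl(\sum_{j\in S} k(j) u(j)\Bigr)^2 \leq N \sum_{j\in S} k(j)^2 u(j)^2 = N \sum_{j=1}^{\infty} k(j)^2 u(j)^2.
\]
Combining this with the two displays above yields
\[
\frac{1}{2N}(\cL u(0))^2 = \frac{2}{N}\Bigl(\sum_{j=1}^{\infty} k(j) u(j)\Bigr)^2 \leq 2 \sum_{j=1}^{\infty} k(j)^2 u(j)^2 \leq \Gamma_2(u)(0),
\]
which is exactly the $CD(0,2N)$ inequality at $0$.

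There is no genuine obstacle here beyond bookkeeping: the only subtlety is making sure the hypothesis of the symmetrization part of Lemma \ref{lem:symmetry} applies, which it does because $\kappa = 0 \ge 0$ and $d = 2N$ is finite. The sharpness discussion alluded to in the paper (that $2N$ is in general best possible) would then presumably be established by a separate example, which is not required by the statement above.
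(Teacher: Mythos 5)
Your proof is correct and follows essentially the same route as the paper: reduce via Lemma \ref{lem:symmetry} to symmetric $u$ with $u(0)=0$, use the diagonal lower bound $\Gamma_2(u)(0)\ge 2\sum_{j\ge 1}k(j)^2u(j)^2$ (which the paper re-derives inline rather than citing \eqref{basicGamma2Est}), and close with Cauchy--Schwarz over the $N$ positive support points. The only cosmetic difference is that you index by the set $S=\operatorname{supp} k\cap\iN$ while the paper enumerates the support as $x_1,\dots,x_N$.
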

\begin{proof}
By Lemma \ref{lem:symmetry}, it suffices to prove the asserted CD-inequality at the point $x=0$ for all (admissible) symmetric
functions $u$ with $u(0)=0$.
By (K1) and (K2) we may assume that 
$\operatorname{supp} k = \{-x_N, \ldots, -x_1, x_1, \ldots, x_N\} \subset \iZ\setminus\{0\}$. 
For simplicity of notation we set $x_0 := 0$ and let $u$ be a symmetric admissible function with $u(0) = 0$. Taking $l = -j$  (and hence $x_{-j} = -x_j$) in the sum below we obtain that
\begin{align*}
  \Gamma_2(u)(0)& = \frac{1}{4} \sum_{j=-N}^N \sum_{l=-N}^N k(x_j)k(x_l) \left(u(x_j+x_l)-u(x_j) - u(x_l)\right)^2
  \\& \geq 
  \frac{1}{4} \sum_{j=-N}^N k(x_j)k(-x_j) \left(u(x_j+(-x_j))-u(x_j) - u(-x_j)\right)^2\\
  &= 2 \sum_{j = 1}^N (k(x_j) u(x_j))^2 
  \geq \frac{2}{N}  \left( \sum_{j = 1}^N k(x_j) u(x_j)\right)^2
  \\&= \frac{1}{2N} \left( \sum_{j = -N}^N k(x_j) u(x_j)\right)^2 = \frac{1}{2N}(\cL u(0))^2
\end{align*}
after employing with $y_i := k(x_i)u(x_i)$ the Cauchy-Schwarz inequality $\left(\sum _{i=1}^N y_i \right)^2 \leq N \sum _{i=1}^N y_i^2$. \qedhere
\end{proof}

The following example shows that Theorem \ref{finite support} is optimal
in general.
\begin{bei}
{\em 
For $N \in \N$ we define
\[
k(x) = \begin{cases}
1, & x = 2j+1 \text{ for some } -N \leq j\leq N-1\\
0,& \text{else}
\end{cases}
\quad \text{ and } \quad
u(x) = \begin{cases}
1,& x \text{ is odd}\\
0, & x = 0\\
2, &\text{else.}
\end{cases}
\]
Then $k$ is symmetric and satisfies the assumption of Theorem
\ref{finite support}, and we calculate
\[
\mathcal{L}(u)(0) = \sum_{j=-N}^{N-1} k(2j+1)u(2j+1) =
\sum_{j=-N}^{N-1} 1 = 2N.
\]
For $\Gamma_2$ we find that $u(2j + 2l +2) \neq 2$ if and only if $l =
-j-1$, whence
\[
\Gamma_2(u)(0) = \frac{1}{4}
\sum_{j=-N}^{N-1} \sum_{l=-N}^{N-1} \big(u(2l+2j+2) - 1 -1 \big)^2 =
\frac{1}{4} \sum_{j=-N}^{N-1} 4 = 2N = \frac{1}{2N}
\big(\mathcal{L}(u)(0)\big)^2.\qedhere
\]
}
\end{bei}

Clearly, if $CD(\kappa, d)$ is satisfied with some positive curvature $\kappa$, then $CD(0,d)$ holds. Similarly we find that $CD(\kappa,\infty)$ with $\kappa>0$ implies $CD(0,d)$ for some finite $d$.
\begin{lemma}
\label{lem:NoPositiveCurvature}
 If $CD(\kappa,d)$ holds for some $\kappa \geq 0$ and $d \in (0,\infty]$, then $CD(0,\tilde d)$ holds, where $\tilde d =\frac{2d|k|_1}{d \kappa + 2|k|_1}$ for $d < \infty$ and $\tilde d =\frac{2|k|_1}{\kappa}$ if $d = \infty$.
\end{lemma}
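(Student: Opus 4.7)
The plan is to reduce the $CD(\kappa,d)$-inequality to a $CD(0,\tilde d)$-inequality by bounding the carré du champ $\Gamma(u)(x)$ from below in terms of $(\cL u(x))^2$ via a Cauchy--Schwarz argument. The only nontrivial input beyond the hypothesis is a pointwise estimate relating $\Gamma(u)(x)$ and $\cL u(x)$ which exploits the fact that $k \in l_1(\iZ)$.

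First, using the representation
\[
\cL u(x) = \sum_{j\in \iZ} k(j)\bigl(u(x+j)-u(x)\bigr)
\]
and applying Cauchy--Schwarz (with weights $k(j)$), I obtain
\[
(\cL u(x))^2 \le \Bigl(\sum_{j\in \iZ} k(j)\Bigr) \sum_{j\in \iZ} k(j)\bigl(u(x+j)-u(x)\bigr)^2 = 2|k|_1\, \Gamma(u)(x),
\]
by the formula \eqref{Gamma1Formel} for $\Gamma$. Hence $\Gamma(u)(x) \ge \frac{1}{2|k|_1}(\cL u(x))^2$.

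Now, assuming $CD(\kappa,d)$, I substitute this lower bound into $\kappa \Gamma(u)(x)$ (which is legitimate since $\kappa \ge 0$) to obtain in the case $d<\infty$
\[
\Gamma_2(u)(x) \ge \frac{1}{d}(\cL u(x))^2 + \kappa\,\Gamma(u)(x) \ge \Bigl(\frac{1}{d} + \frac{\kappa}{2|k|_1}\Bigr)(\cL u(x))^2 = \frac{d\kappa + 2|k|_1}{2d|k|_1}\,(\cL u(x))^2,
\]
which is exactly $CD(0,\tilde d)$ with $\tilde d = \frac{2d|k|_1}{d\kappa + 2|k|_1}$. The case $d=\infty$ is handled identically, simply dropping the $\frac{1}{d}(\cL u(x))^2$ term to get $\Gamma_2(u)(x) \ge \frac{\kappa}{2|k|_1}(\cL u(x))^2$, yielding $\tilde d = \frac{2|k|_1}{\kappa}$.

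There is no real obstacle here; the only thing to verify is that the Cauchy--Schwarz step is valid under the admissibility convention, which it is because the relevant sums converge for $u$ admissible at $x$ (either in $l_{1,k}$ or $l_{2,k}$ shifted by $x$). The formula for $\tilde d$ follows by pure algebra.
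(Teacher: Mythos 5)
Your proof is correct and takes essentially the same route as the paper: both establish the pointwise bound $(\cL u(x))^2 \le 2|k|_1\,\Gamma(u)(x)$ (you via Cauchy--Schwarz with weights $k(j)$, the paper via Jensen's inequality with the probability measure $k(j)/|k|_1$, which is the same estimate) and then substitute it into the $CD(\kappa,d)$ inequality. The algebra and the conclusion match the paper exactly.
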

\begin{proof}
 Again, we may restrict ourselves to the described CD-inequalities at $x=0$ and we may assume that
 the admissible functions vanish at zero.
   
   First we observe by applying Jensen's inequality that
 $ (\mathcal{L} u(0))^2 \leq 2 |k|_1  \Gamma(u)(0)$.
Indeed, we have
\[
(\mathcal{L} u(0))^2  
=|k|_1^2\left( \sum_{j\in \iZ} \frac{k(j)}{|k|_1} u(j)\right)^2
\leq |k|_1 \sum_{j\in \iZ} k(j) u(j)^2
= 2|k|_1 \Gamma(u)(0),
\]
whence \[
\Gamma_2(u)(0) \geq \kappa \Gamma(u)(0) + \frac{1}{d} (\mathcal{L} u(0))^2
 \geq \left(\frac{\kappa}{2|k|_1} + \frac{1}{d}\right)(\mathcal{L} u(0))^2
  = \frac{1}{\tilde d}(\mathcal{L} u(0))^2
\]
where $\tilde d = \frac{2d|k|_1}{d\kappa + 2|k|_1}$.
\end{proof}

Let us present a few adaptions of general results concerning CD-inequalities on graphs.

\begin{proposition}\label{prop:CD(-k,2)}
 For any kernel $k$ we have $CD(-|k|_1,2)$.
\end{proposition}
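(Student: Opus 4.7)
The plan is to show that $CD(-|k|_1, 2)$ follows from essentially two ingredients already identified earlier in the paper: the nonnegativity of $\Gamma_2$, which is immediate from the representation formula \eqref{Gamma2Formel}, and the Jensen/Cauchy--Schwarz estimate relating $\cL u$ to $\Gamma(u)$ that was used in the proof of Lemma \ref{lem:NoPositiveCurvature}.

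First I would invoke Lemma \ref{lem:symmetry} to reduce to proving the inequality
\[
\Gamma_2(u)(0) \geq \tfrac{1}{2}\bigl(\cL u(0)\bigr)^2 - |k|_1 \Gamma(u)(0)
\]
for admissible functions $u$ with $u(0) = 0$. Under this normalization, $\cL u(0) = \sum_{j \in \iZ} k(j) u(j)$ and $2\Gamma(u)(0) = \sum_{j \in \iZ} k(j) u(j)^2$. By the same Jensen-type application of the Cauchy--Schwarz inequality used in the proof of Lemma \ref{lem:NoPositiveCurvature},
\[
\bigl(\cL u(0)\bigr)^2 = |k|_1^2 \Bigl( \sum_{j} \tfrac{k(j)}{|k|_1} u(j)\Bigr)^{\!2} \leq |k|_1 \sum_{j} k(j) u(j)^2 = 2 |k|_1 \Gamma(u)(0),
\]
so that $\tfrac{1}{2}(\cL u(0))^2 - |k|_1 \Gamma(u)(0) \leq 0$.

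On the other hand, formula \eqref{Gamma2Formel} expresses $\Gamma_2(u)(0)$ as a nonnegative sum of squares, so $\Gamma_2(u)(0) \geq 0$. Combining these two bounds immediately yields the required inequality, which is $CD(-|k|_1, 2)$ at the origin; Lemma \ref{lem:symmetry} then extends this to arbitrary $x \in \iZ$.

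There is no real obstacle here: the proposition is an almost free consequence of two facts already stated in the preceding pages. The only point to check is that admissibility in the sense introduced before Lemma \ref{lem:symmetry} (namely $u(\cdot) \in l_{2,k}(\iZ)$ after translation, since $\kappa \neq 0$) ensures that both $\Gamma(u)(0)$ and $(\cL u(0))^2$ are finite, so that the Cauchy--Schwarz manipulation is rigorous; this is precisely the reason for requiring $l_{2,k}$ rather than $l_{1,k}$ in the negative-curvature case.
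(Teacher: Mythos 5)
Your proof is correct but takes a genuinely different route from the paper's. The paper obtains the proposition as an immediate consequence of the Lin--Yau-type identity \eqref{eq:LemmaLY}, which writes $\Gamma_2(u)(0)$ as a manifestly nonnegative double sum plus exactly $-|k|_1\Gamma(u)(0) + \tfrac{1}{2}(\cL u(0))^2$; dropping the nonnegative part gives $CD(-|k|_1,2)$ on the nose. You instead avoid any structural identity: you note that $\Gamma_2 \geq 0$ always, and that the Cauchy--Schwarz bound $(\cL u(0))^2 \leq 2|k|_1\Gamma(u)(0)$ already recorded in the proof of Lemma~\ref{lem:NoPositiveCurvature} forces the target right-hand side $\tfrac{1}{2}(\cL u(0))^2 - |k|_1\Gamma(u)(0)$ to be $\leq 0$, so the inequality holds vacuously. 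Both arguments are valid and both rest on ingredients already in the text. What the paper's route buys is the identity itself, which is reused immediately afterwards to extract the sharper $CD(2c - |k|_1, 2)$ when $k \geq c$ on its support — a refinement your more elementary observation would not deliver. What your route reveals, on the other hand, is that $CD(-|k|_1,2)$ by itself carries no information beyond the universal facts $\Gamma_2 \geq 0$ and $(\cL u)^2 \leq 2|k|_1\Gamma(u)$, which is worth keeping in mind when assessing how much content the proposition actually has.
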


This proposition is an immediate consequence of the following identity, which can be obtained analogously to \cite[Thm 1.3]{MR2644381}.

\begin{lemma}[{\cite[(2.9)]{MR3164168}}]
For any kernel we have 
 \begin{equation} \label{eq:LemmaLY}
  \Gamma_2(u)(0) = \frac{1}{4} \sum_{j,l} k(l)k(j) [u(j+l)-2u(j) + u(0)]^2 - |k|_1\Gamma(u)(0) + \frac{1}{2}(\mathcal{L} u(0))^2.
 \end{equation}
\end{lemma}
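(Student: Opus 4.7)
The plan is to verify the identity by direct algebraic manipulation, exploiting the symmetry of $k$ and a parallelogram-style identity. First I would observe that all quantities appearing on both sides of \eqref{eq:LemmaLY} (the inner bracket $u(j+l)-2u(j)+u(0)$, the square differences defining $\Gamma$ and $\Gamma_2$, and $\mathcal{L}u(0)$) are invariant under the replacement $u\mapsto u-u(0)$, so I may assume without loss of generality that $u(0)=0$, which simplifies the bookkeeping.

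Next I would use the $(j,l)\leftrightarrow(l,j)$ symmetry of the double sum, which is available because $k(j)k(l)$ is symmetric. This yields
\[
\sum_{j,l}k(j)k(l)\bigl[u(j+l)-2u(j)\bigr]^2 \;=\; \tfrac12\sum_{j,l}k(j)k(l)\Bigl\{\bigl[u(j+l)-2u(j)\bigr]^2 + \bigl[u(j+l)-2u(l)\bigr]^2\Bigr\}.
\]
Setting $X:=u(j+l)-u(j)-u(l)$ and $Y:=u(j)-u(l)$, one has $u(j+l)-2u(j)=X-Y$ and $u(j+l)-2u(l)=X+Y$, so the parallelogram identity $(X-Y)^2+(X+Y)^2=2(X^2+Y^2)$ gives
\[
\tfrac14\sum_{j,l}k(j)k(l)\bigl[u(j+l)-2u(j)\bigr]^2 \;=\; \Gamma_2(u)(0) \;+\; \tfrac14\sum_{j,l}k(j)k(l)\bigl(u(j)-u(l)\bigr)^2,
\]
recognizing the first term as the formula \eqref{Gamma2Formel} for $\Gamma_2$ at $0$ under $u(0)=0$.

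The final step is to expand the remaining quadratic in $u(j)-u(l)$: the cross term produces $\bigl(\sum_j k(j)u(j)\bigr)^2=(\mathcal{L}u(0))^2$, while the two diagonal terms each produce $|k|_1\sum_j k(j)u(j)^2=2|k|_1\Gamma(u)(0)$. Combining,
\[
\tfrac14\sum_{j,l}k(j)k(l)\bigl(u(j)-u(l)\bigr)^2 \;=\; |k|_1\Gamma(u)(0) - \tfrac12\bigl(\mathcal{L}u(0)\bigr)^2,
\]
and rearranging yields \eqref{eq:LemmaLY}. There is no substantial obstacle; the only points requiring care are the convergence of the double sums (ensured by admissibility, since for $u\in l_{2,k}(\iZ)$ with $u(0)=0$ all individual pieces are absolutely summable via Cauchy--Schwarz and $k\in l_1(\iZ)$) and the translation reduction to $u(0)=0$. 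The proof for the general basepoint $x\in\iZ$ then follows from the first part of Lemma~\ref{lem:symmetry}.
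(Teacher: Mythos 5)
Your proof is correct. The paper itself does not give a proof of this lemma; it cites \cite[(2.9)]{MR3164168} and remarks that the identity can be obtained analogously to \cite[Thm 1.3]{MR2644381}, so there is no internal argument to compare against. Your derivation supplies the missing details cleanly: the reduction to $u(0)=0$ is legitimate since every term in \eqref{eq:LemmaLY} (including the bracket $u(j+l)-2u(j)+u(0)$) is unchanged under $u\mapsto u-u(0)$; the $(j,l)\leftrightarrow(l,j)$ symmetrization is valid because $k(j)k(l)$ is symmetric; the parallelogram identity with $X=u(j+l)-u(j)-u(l)$, $Y=u(j)-u(l)$ correctly produces $\Gamma_2(u)(0)$ plus $\tfrac14\sum_{j,l}k(j)k(l)(u(j)-u(l))^2$; and the final expansion of the latter into $|k|_1\Gamma(u)(0)-\tfrac12(\mathcal{L}u(0))^2$ is exactly right, using $\Gamma(u)(0)=\tfrac12\sum_j k(j)u(j)^2$ and $\mathcal{L}u(0)=\sum_j k(j)u(j)$ under $u(0)=0$. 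The parallelogram route is arguably slicker than the brute-force expansion in the cited references, as it makes the appearance of $\Gamma_2$ structural rather than computational. One minor point: the lemma in the paper is stated only at $x=0$, so the closing remark about extending to a general basepoint via Lemma~\ref{lem:symmetry}, while correct, is not needed.
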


\begin{corollary}{\cite[Thm 1.2]{MR2644381}}
Assume that $k(j) \geq c$ for $j \in \operatorname{supp} k$. Then $CD(2c-|k|_1, 2)$ holds.
\end{corollary}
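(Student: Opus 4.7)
The plan is to derive the claimed inequality as a direct consequence of the identity \eqref{eq:LemmaLY}. By Lemma~\ref{lem:symmetry} it suffices to verify the inequality at $x=0$ for admissible $u$ with $u(0)=0$. Subtracting $\tfrac12(\cL u(0))^2$ from both sides, we see that $CD(2c-|k|_1,2)$ at $0$ is equivalent to
\[
\frac{1}{4}\sum_{j,l\in\iZ} k(j)k(l)\bigl[u(j+l)-2u(j)+u(0)\bigr]^2 \;\geq\; (2c-|k|_1)\,\Gamma(u)(0) + |k|_1\,\Gamma(u)(0),
\]
i.e.\ to the inequality
\begin{equation}\label{eq:goal}
\frac{1}{4}\sum_{j,l\in\iZ} k(j)k(l)\bigl[u(j+l)-2u(j)+u(0)\bigr]^2 \;\geq\; 2c\,\Gamma(u)(0).
\end{equation}

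The main idea is to discard all off-diagonal contributions and retain only the terms with $l=-j$ in the double sum, which is legitimate since every summand is nonnegative. For $l=-j$ the expression in brackets becomes $u(0)-2u(j)+u(0)=-2(u(j)-u(0))$, so that its square equals $4(u(j)-u(0))^2$, and the prefactor simplifies by the symmetry $k(-j)=k(j)$ from (K1) to $k(j)k(-j)=k(j)^2$. Hence the left-hand side of \eqref{eq:goal} is bounded below by
\[
\sum_{j\in\iZ} k(j)^2\bigl(u(j)-u(0)\bigr)^2.
\]

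Now the hypothesis $k(j)\geq c$ on $\operatorname{supp} k$ yields $k(j)^2\geq c\,k(j)$ for every $j\in\iZ$ (trivially so for $j\notin\operatorname{supp} k$, where both sides vanish), and therefore
\[
\sum_{j\in\iZ} k(j)^2\bigl(u(j)-u(0)\bigr)^2 \;\geq\; c\sum_{j\in\iZ} k(j)\bigl(u(j)-u(0)\bigr)^2 \;=\; 2c\,\Gamma(u)(0),
\]
recalling \eqref{Gamma1Formel}. This establishes \eqref{eq:goal}, and reinserting it into \eqref{eq:LemmaLY} produces exactly the claimed $CD(2c-|k|_1,2)$-inequality at $0$.

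There is no real obstacle here: the argument is just the observation that the diagonal $l=-j$ reduces the second difference in $\Gamma_2$ to (twice) the first difference that appears in $\Gamma$, and this alone suffices to extract the curvature gain $2c$, with the term $-|k|_1\Gamma(u)(0)$ already present in \eqref{eq:LemmaLY} accounting for the loss. The bound is naturally sharp only in the sense that it uses solely the pointwise lower bound on $k$ on its support.
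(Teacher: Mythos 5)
Your argument is correct and is essentially the paper's own proof: both start from the identity \eqref{eq:LemmaLY}, retain only the diagonal terms $l=-j$ in the nonnegative double sum to obtain $\sum_j k(j)^2(u(j)-u(0))^2$, and then use $k(j)^2\geq c\,k(j)$ on $\operatorname{supp} k$ to extract $2c\,\Gamma(u)(0)$. The only cosmetic difference is that you rephrase the estimate by moving the $\tfrac12(\cL u(0))^2$ and $-|k|_1\Gamma(u)(0)$ terms to the other side first, whereas the paper substitutes directly into the identity.
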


\begin{proof}
Choosing $l = -j$ in \eqref{eq:LemmaLY} we obtain
\begin{align*}
 \Gamma_2(u)(0)  &\geq \frac{4}{4} \sum_j k(j)^2 [u(0)-u(j)]^2 - |k|_1\Gamma(u)(0)  + \frac{1}{2}(\mathcal{L}(u)(0))^2
  \\&\geq (2c-|k|_1)\Gamma(u)(0)  + \frac{1}{2}(\mathcal{L}(u)(0))^2. \qedhere
\end{align*}
\end{proof}
Since $k$ is symmetric we have $2c  \leq |k|_1$ with equality only for kernels whose support consist of exactly two elements, e.g. for the discrete Laplacian \eqref{eq:DiscreteLaplacian}.

The next remark shows that for kernels with finite second moment we always have the necessary condition $d \geq 1$ if $CD(0,d)$ holds. 

\begin{bemerk1} \label{square}
{\em Suppose that the kernel is such that $\sum_{j\in \iN}j^2 k(j)<\infty$. For $u(j)=j^2$, $j\in \iZ$, we find that
 \begin{align*}
 \Gamma_2(u)(0)& =\,\frac{1}{4}\,\sum_{j,l\in \iZ} k(j)k(l) \big((j+l)^2-j^2-l^2\big)^2
 =\sum_{j,l\in \iZ} k(j)k(l) j^2 l^2\\
& = \big(\cL(u)(0)\big)^2.
 \end{align*}
Hence, if the kernel satisfies $CD(0,d)$, then $d\ge 1$. } 
\end{bemerk1}


\section{Kernels of power type with $\beta<2$}
\label{sec:betaKleiner2}
In this section we consider an important class of kernels for which $CD(0,d)$ fails to hold for all finite $d>0$. 

We use the following notation. For functions $f,g:D \to \R$ we write 
$f(x) \lesssim g(x)$ if there exists some constant $C > 0$ such that $f(x) \leq C g(x)$ for all $x \in D$. Moreover, if $f(x) \lesssim g(x) \lesssim f(x)$ we write $f(x) \sim g(x)$.
\begin{satz} \label{betalessthan2}
Let $k$ be a power type kernel as in \eqref{powerkerneldef} with $c>0$ and $\beta\in (0,2)$.
Then the corresponding operator $\cL$ fails to satisfy $CD(0,d)$ for all finite $d>0$.
\end{satz}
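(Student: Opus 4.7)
The strategy, taken from the introduction, is to exhibit an admissible family $(u_\varepsilon)_{\varepsilon>0}$ for which
\[
\frac{\Gamma_2(u_\varepsilon)(0)}{\bigl(\mathcal{L}u_\varepsilon(0)\bigr)^2}\longrightarrow 0\quad\text{as }\varepsilon\to 0^+,
\]
which, since $\mathcal{L}u_\varepsilon(0)\neq 0$, rules out $CD(0,d)$ at the origin for every finite $d>0$. By Lemma \ref{lem:symmetry} it suffices to test the inequality at $x=0$. I would use
\[
u_\varepsilon(j):=|j|^{\beta-\varepsilon}\ (j\neq 0),\quad u_\varepsilon(0):=0,\qquad \varepsilon\in(0,\beta),
\]
a scale-homogeneous family that is admissible (in $l_{1,k}(\iZ)$) since $\sum_{j\neq 0}k(j)|u_\varepsilon(j)|=2c\sum_{j\geq 1}j^{-1-\varepsilon}<\infty$. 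A direct computation produces the denominator,
\[
\mathcal{L}u_\varepsilon(0)=2c\sum_{j=1}^\infty j^{-1-\varepsilon}=2c\,\zeta(1+\varepsilon)\sim \frac{2c}{\varepsilon}\qquad(\varepsilon\to 0^+),
\]
so $(\mathcal{L}u_\varepsilon(0))^2\sim 4c^2\varepsilon^{-2}$.

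The main task is the upper bound $\Gamma_2(u_\varepsilon)(0)\lesssim \varepsilon^{-1}$. Writing $\alpha:=\beta-\varepsilon$ and $F(j,l):=|j+l|^\alpha-|j|^\alpha-|l|^\alpha$, the expression \eqref{Gamma2Formel} at $x=0$ is symmetric in $(j,l)$, so I would restrict to $1\leq|l|\leq|j|$ and split into two regimes. On the \emph{far} region $|l|\leq|j|/2$, the mean value theorem applied to $x\mapsto x^\alpha$ on an interval in $[|j|/2,3|j|/2]$ (which contains both $|j|$ and $|j+l|$) yields $\bigl||j+l|^\alpha-|j|^\alpha\bigr|\lesssim |l|\,|j|^{\alpha-1}$, whence $|F(j,l)|\lesssim |l|\,|j|^{\alpha-1}+|l|^\alpha$. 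On the \emph{near-diagonal} $|j|/2<|l|\leq|j|$, which contains the singular case $l=-j$ (where $F=-2|j|^\alpha$), the crude bound $|F(j,l)|\lesssim |l|^\alpha$ is enough. Squaring, inserting $k(j)k(l)=c^2|j|^{-1-\beta}|l|^{-1-\beta}$, and using $\sum_{1\leq|l|\leq N}|l|^{1-\beta}\lesssim N^{2-\beta}$ (valid precisely because $\beta<2$), the dominant far contribution becomes
\[
\sum_{|j|\geq 2}|j|^{2\alpha-3-\beta}\sum_{1\leq|l|\leq|j|/2}|l|^{1-\beta}\lesssim\sum_{|j|\geq 2}|j|^{2\alpha-1-2\beta}=\sum_{|j|\geq 2}|j|^{-1-2\varepsilon}\sim \frac{1}{\varepsilon},
\]
and the remaining pieces (the $|l|^{2\alpha}$-term and the near-diagonal contribution) turn out to be of the same order by identical bookkeeping. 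Hence $\Gamma_2(u_\varepsilon)(0)=O(\varepsilon^{-1})$, so the ratio is $O(\varepsilon)\to 0$ and the proof is complete.

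The main obstacle is the careful case distinction for $F(j,l)$ and the verification that the near-diagonal piece, where the Taylor expansion fails, does not exceed the order $\varepsilon^{-1}$. The hypothesis $\beta<2$ is used in exactly one place, namely the estimate $\sum_{1\leq|l|\leq N}|l|^{1-\beta}\sim N^{2-\beta}$; this neatly explains both why the argument breaks at the critical value $\beta=2$ and why the complementary range $\beta>2$ admits the positive result of Theorem \ref{posresult}.
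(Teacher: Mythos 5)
Your proof is correct and uses the same test family $u_\varepsilon(j)=|j|^{\beta-\varepsilon}$ as the paper, but it organizes the $\Gamma_2$ estimate via a genuinely different decomposition. The paper splits the double sum by sign, separating the $u_\varepsilon(j+l)$ and $u_\varepsilon(j-l)$ pieces into two sums $J_\varepsilon$, $K_\varepsilon$ over $1\le l\le j$, and controls each via the auxiliary functions $\Phi_\gamma(x)=(1+x)^\gamma-1-x^\gamma$ and $\Psi_\gamma(x)=1+x^\gamma-(1-x)^\gamma$; since these behave differently near $0$ according to whether $\gamma>1$ or $\gamma<1$, the paper runs two parallel cases $\beta\in(1,2)$ and $\beta\in(0,1]$, using Lemmas \ref{philemma} and \ref{psilemma}. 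Your split by the ratio $|l|/|j|$ into a far region, where the mean value theorem yields $|F(j,l)|\lesssim|l|\,|j|^{\alpha-1}+|l|^\alpha$ uniformly in $\alpha\in(0,2)$, and a near-diagonal region, where the crude $|F(j,l)|\lesssim|l|^\alpha$ suffices, handles both sign patterns and all $\beta\in(0,2)$ at once and avoids the $\beta$-dependent case distinction, which is a real streamlining. You also prove only the one-sided bound $\Gamma_2(u_\varepsilon)(0)\lesssim 1/\varepsilon$; the paper explicitly remarks that this suffices, and establishes the two-sided $\Gamma_2\sim 1/\varepsilon$ mainly to inform the later discussion of the critical case $\beta=2$ (where the order jumps to $\varepsilon^{-2}$). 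One point you should write out rather than assert: the claim that the $|l|^{2\alpha}$-term and the near-diagonal piece are \emph{of the same order by identical bookkeeping} uses that $2\alpha-1-\beta=\beta-1-2\varepsilon>-1$ for small $\varepsilon$, so that the inner $l$-sum scales as $|j|^{\beta-2\varepsilon}$; this holds, and indeed gives $\sim 1/\varepsilon$, but the exponent arithmetic is not literally the same as in the dominant term.
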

\begin{proof}
We may assume without restriction of generality that $c=1$. 
We consider the family of functions
\[
u_\varepsilon(j)=|j|^{\beta-\varepsilon},\quad j\in \iZ, \;\varepsilon\in (0,\beta).
\]
We will show that 
\begin{equation} \label{ideacounterex}
\cL(u_\varepsilon)(0)\sim \frac{1}{\varepsilon}\;\mbox{and}\;\Gamma_2(u_\varepsilon)(0) \sim \frac{1}{\varepsilon}\quad \mbox{as}\;\varepsilon\to 0,
\end{equation}
which implies that for small $\varepsilon>0$ 
\[
\Gamma_2(u_\varepsilon)(0)\le C \varepsilon (\cL(u_\varepsilon)(0))^2
\]
with some constant $C>0$, contradicting any $CD(0,d)$ inequality with finite $d>0$. Observe that \eqref{ideacounterex}
contains more information than what is actually required for the proof of Theorem
\ref{betalessthan2}. In fact, concerning $\Gamma_2$ it would be enough to show that $\Gamma_2(u_\varepsilon)(0) \lesssim \frac{1}{\varepsilon}$ as $\varepsilon\to 0$.

The first claim in \eqref{ideacounterex} can be easily verified. Indeed, 
\[
\cL(u_\varepsilon)(0)=2 \sum_{j=1}^\infty k(j)u_\varepsilon(j)=2 \sum_{j=1}^\infty \frac{1}{j^{1+\beta}} 
j^{\beta-\varepsilon}=2 \sum_{j=1}^{\infty} \frac{1}{j^{1+\varepsilon}} \sim  \frac{1}{\varepsilon}
\]
as $\varepsilon\to 0$, since {by Lemma \ref{lem:IntegralComparison}} the last sum can be controlled from below and above by a positive constant times the integral
\[
\int_1^\infty \frac{dx}{x^{1+\varepsilon}}=\,\frac{1}{\varepsilon}.
\]

Turning to $\Gamma_2$, by symmetry it is enough to consider the terms 
\[
J_\varepsilon:=\,\sum_{j=1}^\infty \sum_{l=1}^{j}k(j)k(l)\big(u_\varepsilon(j+l)-u_\varepsilon(j)-u_\varepsilon(l)\big)^2
\]
and 
\[
K_\varepsilon:=\,\sum_{j=1}^\infty \sum_{l=1}^{j}k(j)k(l)\big(u_\varepsilon(j-l)-u_\varepsilon(j)-u_\varepsilon(l)\big)^2.
\]
The first term can be reformulated as follows.
\begin{align}
J_\varepsilon= & \,\sum_{j=1}^\infty \sum_{l=1}^{j}\frac{1}{j^{1+\beta}l^{1+\beta}}\big((j+l)^{\beta-\varepsilon}
-j^{\beta-\varepsilon}-l^{\beta-\varepsilon} \big)^2\nonumber\\
= & \,\sum_{j=1}^\infty \sum_{l=1}^{j}\frac{j^{2\beta-2\varepsilon}}{j^{1+\beta}l^{1+\beta}}
\big[\Phi_\gamma\big(\frac{l}{j}\big)\big]^2, \label{Jeps1}
\end{align}
where $\gamma=\beta-\varepsilon$ and the function $\Phi_\gamma$ is defined by
\[
\Phi_\gamma(x)=(1+x)^\gamma-1-x^\gamma,\quad x\in [0,1].
\]
\begin{lemma} \label{philemma}
In the case $\gamma\in (1,2)$ there holds
\[
(\gamma-1) x\le \Phi_\gamma(x)\le 2\gamma x,\quad x\in [0,1].
\]
If $\gamma\in (0,1)$, we have
\[
-x^\gamma \le \Phi_\gamma(x)\le -(1-\gamma)x^\gamma,\quad x\in [0,1].
\]
\end{lemma}
\begin{proof}
For $x\in [0,1]$ we have by the mean value theorem that $(1+x)^\gamma-1=\gamma \xi^{\gamma-1}x$
for some $\xi\in [1,1+x]$. Consequently, if $\gamma\in (1,2)$, we see that
\[
\gamma x\le (1+x)^\gamma-1\le 2\gamma x,
\]
which together with $x^\gamma\le x$ implies the first assertion. In the case $\gamma\in (0,1)$, we have for $x\in (0,1]$
that $(1+x)^\gamma-1\le \gamma x^{\gamma-1}x=\gamma x^\gamma$, since $x\le \xi$. This shows the second assertion.
\end{proof}
In order to estimate $J_\varepsilon$ by the aid of Lemma \ref{philemma} we distinguish two cases.

{\em Case 1:} Suppose that $\beta\in (1,2)$. Then $\gamma\in (1,2)$ for sufficiently small $\varepsilon$, and Lemma
 \ref{philemma} and \eqref{Jeps1} then show that $J_\varepsilon$ can be estimated from below and above as
\begin{equation} \label{Jepsbeh}
J_\varepsilon \sim C(\beta)\sum_{j=1}^\infty \sum_{l=1}^{j}\frac{j^{2\beta-2\varepsilon}}{j^{1+\beta}l^{1+\beta}}
\big(\frac{l}{j}\big)^2= C(\beta)\sum_{j=1}^\infty \frac{1}{j^{3-\beta+2\varepsilon}} \sum_{l=1}^{j}l^{1-\beta}.
\end{equation}
By monotonicity of the sequence $(l^{1-\beta})_{l\in \iN}$, the sum $\sum_{l=1}^{j}l^{1-\beta}$ can be controlled from below and
above by a positive constant times the integral
\[
\int_1^{j+1} x^{1-\beta}dx = \frac{1}{2-\beta}\,((j+1)^{2-\beta}-1)\sim j^{2-\beta}.
\]
Therefore
\[
J_\varepsilon \sim C(\beta)\sum_{j=1}^\infty \frac{1}{j^{3-\beta+2\varepsilon}}\, j^{2-\beta}=
 C(\beta)\sum_{j=1}^\infty \frac{1}{j^{1+2\varepsilon}}\sim \frac{1}{\varepsilon}
\]
as $\varepsilon\to 0$.

{\em Case 2:} Suppose now that $\beta\in (0,1]$. Then $\gamma\in (0,1)$, and by Lemma
 \ref{philemma} it follows that $J_\varepsilon$ can be estimated from below and above as
\[
J_\varepsilon \sim C(\beta)\sum_{j=1}^\infty \sum_{l=1}^{j}\frac{j^{2\beta-2\varepsilon}}{j^{1+\beta}l^{1+\beta}}
\big(\frac{l}{j}\big)^{2\beta-2\varepsilon}= C(\beta)\sum_{j=1}^\infty \frac{1}{j^{1+\beta}} 
\sum_{l=1}^{j} l^{\beta-1-2\varepsilon}.
\]
Arguing as in the first case, we see that $\sum_{l=1}^{j} l^{\beta-1-2\varepsilon}\sim j^{\beta-2\varepsilon}$ and
hence
\[
J_\varepsilon \sim C(\beta)\sum_{j=1}^\infty \frac{1}{j^{1+\beta}}\, j^{\beta-2\varepsilon}=
C(\beta)\sum_{j=1}^\infty \frac{1}{j^{1+2\varepsilon}}\sim \frac{1}{\varepsilon}
\]
as $\varepsilon\to 0$.

We now come to the term $K_\varepsilon$. We have
\begin{align}
K_\varepsilon= & \,\sum_{j=1}^\infty \sum_{l=1}^{j}\frac{1}{j^{1+\beta}l^{1+\beta}}\big(
j^{\beta-\varepsilon}+l^{\beta-\varepsilon}-(j-l)^{\beta-\varepsilon}
 \big)^2\nonumber\\
= & \,\sum_{j=1}^\infty \sum_{l=1}^{j}\frac{j^{2\beta-2\varepsilon}}{j^{1+\beta}l^{1+\beta}}
\big[\Psi_\gamma\big(\frac{l}{j}\big)\big]^2, \label{Jeps2}
\end{align}
where again $\gamma=\beta-\varepsilon$ and the function $\Psi_\gamma$ is defined by
\[
\Psi_\gamma(x)=1+x^\gamma-(1-x)^\gamma,\quad x\in [0,1].
\]
\begin{lemma} \label{psilemma}
In the case $\gamma\in (1,2)$ there holds
\[
 x\le \Psi_\gamma(x)\le (\gamma+1) x,\quad x\in [0,1].
\]
If $\gamma\in (0,1)$, we have
\[
 x^\gamma \le \Psi_\gamma(x)\le 3 x^\gamma,\quad x\in [0,1].
\]
\end{lemma}
\begin{proof}
Suppose first that $\gamma\in (1,2)$. For $x\in [0,1]$, we have
$1-(1-x)^\gamma=\gamma \xi^{\gamma-1} x$ for some $\xi\in [1-x,1]$, by the mean value theorem.
Since $\gamma-1>0$, this implies $1-(1-x)^\gamma\le \gamma x$, which together with $x^\gamma\le x$ yields
the upper bound in the assertion. For $x\in [0,\frac{1}{2}]$ we have $\xi\in [\frac{1}{2},1]$, which implies that
$\Psi_\gamma(x)\ge 1-(1-x)^\gamma\ge \gamma (1/2)^{\gamma-1}x\ge x$. If $x\in [\frac{1}{2},1]$, then clearly
$\Psi_\gamma(x)\ge 1\ge x$. This proves the lower bound in the first claim.

Let now $\gamma\in (0,1)$. The lower estimate for $\Psi_\gamma(x)$ is evident, since $1-(1-x)^\gamma\ge 0$. For the upper bound we again use the mean value theorem similarly as above to see that for $x\in [0,\frac{1}{2}]$ there holds
$1-(1-x)^\gamma\le \gamma (1/2)^{\gamma-1}x$, noting that now $\gamma-1<0$. Since $x=x^{1-\gamma}x^{\gamma}
\le (1/2)^{1-\gamma}x^\gamma$, it follows that $\Psi_\gamma(x)\le 2 x^\gamma$. For $x\in [\frac{1}{2},1]$ we
have $1\le 2x\le 2x^\gamma$ and hence $\Psi_\gamma(x)\le 1+x^\gamma \le 3 x^\gamma$. This shows the upper bound
in the second assertion.
\end{proof}

Having Lemma \ref{psilemma} at disposal, we may now estimate $K_\varepsilon$ appropriately. As before, we distinguish
two cases w.r.t.\ the parameter $\beta$.

{\em Case 1:} Assume that $\beta\in (1,2)$. Then \eqref{Jeps2}, the first part of Lemma \ref{psilemma} {and Lemma \ref{lem:IntegralComparison}} show that for sufficiently small $\varepsilon>0$, $K_\varepsilon$ can be estimated from below and above as
\[
K_\varepsilon \sim C(\beta) \,\sum_{j=1}^\infty \sum_{l=1}^{j}\frac{j^{2\beta-2\varepsilon}}{j^{1+\beta}l^{1+\beta}}
\big(\frac{l}{j} \big)^2,
\]
which is the same expression as in the estimation of $J_\varepsilon$ in case 1. Therefore, $ K_\varepsilon \sim \frac{1}{\varepsilon}$ as $\varepsilon\to 0$.

{\em Case 2:} Suppose now that $\beta\in (0,1]$. Then \eqref{Jeps2}, the second part of Lemma \ref{psilemma} {and Lemma \ref{lem:IntegralComparison}} show that for sufficiently small $\varepsilon>0$, $K_\varepsilon$ can be controlled from below and above as
\[
K_\varepsilon \sim C \,\sum_{j=1}^\infty \sum_{l=1}^{j}\frac{j^{2\beta-2\varepsilon}}{j^{1+\beta}l^{1+\beta}}
\big(\frac{l}{j} \big)^{2\beta-2\varepsilon}.
\]
This is the same expression as in the estimation of $J_\varepsilon$ in case 2. Hence $ K_\varepsilon \sim \frac{1}{\varepsilon}$ as $\varepsilon\to 0$.
All in all, we see that $\Gamma_2(u_\varepsilon)(0) \sim \frac{1}{\varepsilon}$ as $\varepsilon\to 0$. 
The proof of Theorem \ref{betalessthan2} is complete.
\end{proof}

Interestingly, in the case $\beta=2$ the family $u_\varepsilon$ considered above does no longer lead to a contradiction of 
$CD(0,d)$ for finite $d>0$. This is due to the fact that now $\Gamma_2(u_\varepsilon)(0) \sim \frac{1}{\varepsilon^2}$ as
$\varepsilon\to 0$. To see this, we can again use the statements in Lemma \ref{philemma} and \ref{psilemma}
where $\gamma\in (1,2)$. As to the term $J_\varepsilon$, we see from \eqref{Jepsbeh} with $\beta=2$ that 
\[
J_\varepsilon \sim C\sum_{j=1}^\infty \frac{1}{j^{1+2\varepsilon}} \sum_{l=1}^{j}l^{-1}. 
\]   
In contrast to the case $\beta<2$, the inner sum growths logarithmically in $j$ and thus
\[
J_\varepsilon \sim C\sum_{j=1}^\infty \frac{1}{j^{1+2\varepsilon}}(1+\log j)\sim \frac{1}{\varepsilon^2},
\]
since $\int_c^\infty \frac{\log x}{x^{1+2\varepsilon}}\,dx \sim  \frac{1}{\varepsilon^2}$ as $\varepsilon\to 0$, for all $c\ge 1$. The term $K_\varepsilon$ enjoys the same behaviour. 


As we have mentioned in the introduction, an important consequence of Theorem \ref{betalessthan2} is the following corollary concerning fractional powers of the discrete Laplacian.
\begin{korollar}
The operator $- \big( - \Delta^{\frac{\beta}{2}} \big)$ fails to satisfy $CD(0,d)$ for all finite $d>0$.
\end{korollar}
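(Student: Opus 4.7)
The plan is to identify the fractional power $-(-\Delta)^{\beta/2}$ with an operator of the form \eqref{genlaplacedef} whose kernel has the asymptotic behaviour of the power-type kernel \eqref{powerkerneldef}, and then reduce to Theorem \ref{betalessthan2}.

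First I would compute the kernel explicitly. Write $p_t(j) = e^{-2t}I_{|j|}(2t)$ for the transition kernel of the discrete heat semigroup $e^{t\Delta}$ (where $I_n$ is the modified Bessel function of the first kind). Using that $\sum_{j\in\iZ} p_t(j) = 1$ one has
\[
(u - e^{t\Delta}u)(x) = -\sum_{j\neq 0} p_t(j)\bigl(u(x+j) - u(x)\bigr).
\]
Inserting this into the semigroup definition of $-(-\Delta)^{\beta/2}$ and exchanging sum and integral (which requires justification for, say, bounded $u$), we obtain
\[
-(-\Delta)^{\beta/2}u(x) = \sum_{j\in\iZ} k_\beta(j)\bigl(u(x+j) - u(x)\bigr), \qquad k_\beta(j) := -\frac{1}{\Gamma(-\beta/2)}\il_0^\infty p_t(j)\frac{\d t}{t^{1+\beta/2}}
\]
for $j\neq 0$, with $k_\beta(0)=0$. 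Since $\Gamma(-\beta/2)<0$ for $\beta\in(0,2)$, the prefactor is positive; symmetry $k_\beta(-j)=k_\beta(j)$ follows from the symmetry of $p_t$, and finiteness of $|k_\beta|_1$ is standard. So $k_\beta$ satisfies (K1), (K2).

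Next I would invoke the known pointwise asymptotics
\[
k_\beta(j) \sim \frac{c_\beta}{|j|^{1+\beta}} \qquad \text{as } |j|\to\infty,
\]
with an explicit positive constant $c_\beta$; this is the content of \cite[(1.3) ff.]{MR3787555}. In particular there exist constants $0<c_1<c_2$ with $c_1 |j|^{-(1+\beta)} \leq k_\beta(j) \leq c_2 |j|^{-(1+\beta)}$ for all $j\neq 0$.

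Finally I would re-run the argument of Theorem \ref{betalessthan2} with $k$ replaced by $k_\beta$ using the two-sided bound. For $u_\varepsilon(j)=|j|^{\beta-\varepsilon}$ the lower bound on $k_\beta$ gives $\cL(u_\varepsilon)(0) \gtrsim 1/\varepsilon$ as $\varepsilon\to 0$, while the upper bound, applied to $J_\varepsilon$ and $K_\varepsilon$ as in the proof of Theorem \ref{betalessthan2}, yields $\Gamma_2(u_\varepsilon)(0) \lesssim 1/\varepsilon$. Hence
\[
\Gamma_2(u_\varepsilon)(0) \leq C\varepsilon\bigl(\cL(u_\varepsilon)(0)\bigr)^2,
\]
ruling out $CD(0,d)$ for any finite $d$. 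The main obstacle is the asymptotic for $k_\beta$: if one does not wish to cite \cite{MR3787555}, one can derive it by splitting the defining integral at $t = j^2$ and using the standard asymptotics $I_{|j|}(2t)\sim (2\pi |j|)^{-1/2}(et/|j|)^{|j|}$ for $t\ll j^2$ and the Gaussian/central-limit behaviour of $p_t$ for $t\gg j^2$; this is technical but routine.
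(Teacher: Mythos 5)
Your proposal is correct and follows essentially the same route as the paper: both cite the two-sided comparison $c_1|j|^{-(1+\beta)}\le k_\beta(j)\le c_2|j|^{-(1+\beta)}$ from Ciaurri–Roncal–Stinga–Torrea–Varona and then transfer the estimates of Theorem \ref{betalessthan2} to the kernel $k_\beta$ via the non-negativity of $u_\varepsilon$ (for $\cL$) and the non-negativity of the squared differences (for $\Gamma_2$). The only difference is cosmetic: you spend more space re-deriving the kernel representation and sketching its asymptotics, whereas the paper simply quotes both from the reference.
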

\begin{proof}
Let $\beta \in (0,2)$ be fixed. From \cite[Theorem 1.1]{MR3787555} we obtain that $- \big( - \Delta^{\frac{\beta}{2}}\big)$ is an operator of the form \eqref{genlaplacedef}, where the kernel is given by
\begin{equation}\label{eq:stingakernel}
k_\beta (j) = \frac{4^{\frac{\beta}{2}} \Gamma\left(\frac{1+\beta}{2}\right)\Gamma \left( | j | - \frac{\beta}{2}\right)}{\sqrt{\pi} \Gamma \left(- \frac{\beta}{2}\right)\Gamma \left( | j | + 1 + \frac{\beta}{2}\right)}, \quad  j \in \mathbb{Z}\setminus \{0\},
\end{equation}
and $k_\beta(0)=0$. Note that in \eqref{eq:stingakernel}, $\Gamma$ denotes  the Gamma function. Throughout this proof we denote by $\mathcal{L}$ and $\Gamma_2$ the operators corresponding to the respective power type kernel from \eqref{powerkerneldef} and by $\Gamma_2^{(\beta)}$ the iterated carr\'e du champ operator for $- \big( - \Delta\big)^\frac{\beta}{2}$.
In \cite[Theorem 1.1]{MR3787555} it is shown that one finds constants $0 < c(\beta) \leq C(\beta)$ such that 
\begin{align*}
\frac{c(\beta)}{|j|^{1+\beta}} \leq k_\beta(j) \leq \frac{C(\beta)}{|j|^{1+\beta}}
\end{align*}
holds for any $j \in \mathbb{Z}\setminus\{0\}$. Thus, choosing the non-negative function $u_\varepsilon$ from the proof of Theorem \ref{betalessthan2} we observe $\big( - \big( - \Delta\big)^{\frac{\beta}{2}} (u_\varepsilon) (0)\big)^2 \geq c(\beta)^2 \left(\mathcal{L}u_\varepsilon (0) \right)^2$ and $\Gamma_2^{(\beta)}(u_\varepsilon)(0) \leq C(\beta)^2\; \Gamma_2 (u_\varepsilon) (0)$. The claim follows from \eqref{ideacounterex}.
\end{proof}

A natural question to ask is whether the statement of Theorem \ref{betalessthan2} remains true for a smaller class of functions, e.g. the space of bounded functions, since the above counterexample is obviously unbounded. We can answer this question negatively by an approximation argument. 

\begin{satz}\label{theo:approximation}
Let $k$ be a power type kernel as in \eqref{powerkerneldef} with $c>0$ and $\beta\in (0,2)$. There exists no finite $d>0$ such that 
\begin{align*}
\Gamma_2 (v) (0) \geq \frac{1}{d} \left(\left( \mathcal{L} v  \right)(0)\right)^2
\end{align*}
holds for all compactly supported functions $v$.
\end{satz}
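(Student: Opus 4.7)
The plan is to construct a family of compactly supported counterexamples by truncating the unbounded family $u_\varepsilon(j)=|j|^{\beta-\varepsilon}$ that drove the proof of Theorem~\ref{betalessthan2}. The natural candidate is the sharp truncation
\[
v_{\varepsilon,N}(j)\;:=\;u_\varepsilon(j)\,\mathbbm{1}_{\{|j|\le N\}},\qquad j\in \iZ,
\]
with $N=N(\varepsilon)\to\infty$ coupled carefully to $\varepsilon\to 0$. Two things must be verified: (a) $\cL v_{\varepsilon,N}(0)$ still grows like $1/\varepsilon$, and (b) the boundary corrections introduced in $\Gamma_2(v_{\varepsilon,N})(0)$ by the truncation stay uniformly bounded in $\varepsilon$. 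Together they force $\Gamma_2(v_{\varepsilon,N})(0)/(\cL v_{\varepsilon,N}(0))^2=O(\varepsilon)$, which contradicts any $CD(0,d)$ inequality with finite $d$ restricted to compactly supported functions.

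For (a), the same integral comparison as in the proof of Theorem~\ref{betalessthan2} gives
\[
\cL v_{\varepsilon,N}(0)\;=\;2\sum_{j=1}^{N}\frac{1}{j^{1+\varepsilon}}\;\sim\;\frac{1-N^{-\varepsilon}}{\varepsilon}.
\]
Choosing $N=N(\varepsilon)$ with $\varepsilon\log N(\varepsilon)\to\infty$ (for example $N(\varepsilon)=\lceil e^{1/\varepsilon^{2}}\rceil$) forces $N^{-\varepsilon}\to 0$ and hence $\cL v_{\varepsilon,N}(0)\sim 1/\varepsilon$.

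For (b), I would split the defining double sum for $\Gamma_2(v_{\varepsilon,N})(0)$ according to which of the three indices $j$, $l$, $j+l$ belongs to $\{-N,\dots,N\}$. The \emph{inner} contribution, where all three lie in $\{-N,\dots,N\}$, coincides termwise with a partial sum of $\Gamma_2(u_\varepsilon)(0)$ and is therefore dominated by the full $\Gamma_2(u_\varepsilon)(0)\sim 1/\varepsilon$ already obtained in the proof of Theorem~\ref{betalessthan2}. The complement splits, up to the symmetry $(j,l)\leftrightarrow(l,j)$, into four boundary cases: $|j|,|l|\le N$ with $|j+l|>N$; $|j|>N$ with both $|l|\le N$ and $|j+l|\le N$; $|j|>N$, $|l|\le N$ with $|j+l|>N$; and $|j|,|l|>N$ with $|j+l|\le N$. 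In each case the squared second difference reduces to an expression of the form $(u_\varepsilon(a)\pm u_\varepsilon(b))^{2}$ or $u_\varepsilon(c)^{2}$ with indices controlled by $N$, and a routine use of integral comparison on sums such as $\sum_{l=1}^{N}l^{\beta-1-2\varepsilon}\sim N^{\beta-2\varepsilon}$ and $\sum_{j>N}j^{-1-\beta}\sim N^{-\beta}$ yields the uniform bound $O(N^{-2\varepsilon})=O(1)$ for each boundary case.

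Combining these estimates gives $\Gamma_2(v_{\varepsilon,N})(0)\lesssim 1/\varepsilon$ while $(\cL v_{\varepsilon,N}(0))^{2}\sim 1/\varepsilon^{2}$, so the quotient vanishes as $\varepsilon\to 0$, which finishes the proof. The main obstacle is the combinatorial bookkeeping for the four boundary cases: although each subcase is individually elementary, one must track carefully how the joint constraints $|j|,|l|,|j+l|\gtrless N$ interact with the decay of $k(j)k(l)$ and the growth of $u_\varepsilon$ in the transition region $|j|\sim N$, in order to make sure none of the pieces picks up a hidden factor of $1/\varepsilon$. Should the sharp truncation create unforeseen difficulties at the boundary, one can instead use a piecewise-linear cut-off of the form $v_{\varepsilon,N}(j)=u_\varepsilon(j)\,\eta(|j|/N)$ with $\eta\equiv 1$ on $[0,1]$, $\eta\equiv 0$ on $[2,\infty)$ and linear in between; this smooths out the jump at $|j|=N$ without affecting the scaling analysis above.
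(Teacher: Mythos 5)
Your primary plan, the sharp truncation $v_{\varepsilon,N}=u_\varepsilon\,\mathbbm 1_{\{|j|\le N\}}$, has a genuine gap that cannot be repaired by any coupling $N=N(\varepsilon)$. The problem is the jump at $|j|=N$: it has size $u_\varepsilon(N)=N^{\beta-\varepsilon}$, which for $\beta>1$ is much larger than the natural scale of second differences of $u_\varepsilon$ near $N$. Concretely, take the single pair $(j,l)=(N,1)$. Then
\[
v_{\varepsilon,N}(N+1)-v_{\varepsilon,N}(N)-v_{\varepsilon,N}(1)= -N^{\beta-\varepsilon}-1,
\]
so this one term already contributes
\[
\tfrac14\,k(N)k(1)\big(N^{\beta-\varepsilon}+1\big)^2\ \gtrsim\ N^{-1-\beta}\,N^{2\beta-2\varepsilon}\ =\ N^{\beta-1-2\varepsilon}
\]
to $\Gamma_2(v_{\varepsilon,N})(0)$. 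For any $\beta\in(1,2)$ and any $\varepsilon<(\beta-1)/2$ the exponent $\beta-1-2\varepsilon$ is strictly positive, so this term diverges as $N\to\infty$ regardless of how $N$ is coupled to $\varepsilon$. Your claim that every boundary case is $O(N^{-2\varepsilon})=O(1)$ is therefore false for $\beta>1$; the case ``$|j|\le N$, $|l|\le N$, $|j+l|>N$'' is not controlled by a tail sum, because the second difference picks up the full $O(N^{\beta-\varepsilon})$ jump rather than a smooth increment.

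Your fallback, a piecewise-linear cut-off on $[N,2N]$, is much closer to what the paper actually does, but you have not verified it, and the paper does something appreciably more conservative: it interpolates linearly on $[N,N^2]$, which makes the slope of the cut-off $\sim N^{\beta-\varepsilon-2}$, strictly smaller than the slope $\sim N^{\beta-\varepsilon-1}$ of $u_\varepsilon$ near $N$ (your $[N,2N]$ cut-off has slope of the same order as $u_\varepsilon$, so it is borderline and requires a separate check). Moreover, the paper's argument is structured differently: it fixes $\varepsilon$ and proves genuine convergence $\cL v_N(0)\to\cL u_\varepsilon(0)$ and $\Gamma_2(v_N)(0)\to\Gamma_2(u_\varepsilon)(0)$ as $N\to\infty$, which decouples the two limits and reduces the statement to Theorem \ref{betalessthan2} cleanly; it then carries out an explicit eight-case analysis of the tail $|\Gamma_2(v_N)(0)-\Gamma_2^M(v_N)(0)|$ (cases I--III for the $j+l$ terms and A--E for the $j-l$ terms), using the Lipschitz-type bounds \eqref{eq:nonmixedmvt}--\eqref{eq:mixedleq1}. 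Your proposal does not carry out any such verification; the sentence ``a routine use of integral comparison yields $O(N^{-2\varepsilon})$'' hides precisely the step that is wrong for the sharp truncation and unproven for the linear one. To fix the argument you should adopt a linear cut-off, preferably on $[N,N^2]$ (or at least argue carefully that $[N,2N]$ suffices), and then actually perform the case analysis, paying particular attention to the transition region where one of $|j|,|l|,|j\pm l|$ is near $N$.
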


For the proof of this statement we will repeatedly use the following lemma.

\begin{lemma}\label{lem:IntegralComparison}
Let $A_1, A_2 \in \N$ with $A_1 < A_2$ and $\gamma \in \mathbb{R}$. Then the following estimates are valid
\begin{equation}\label{eq:intvergleichlarge}
\sum\limits_{m=A_1}^{A_2} m^\gamma \leq \begin{cases} \frac{1}{\gamma + 1} \left(A_2 + 1 \right)^{1+\gamma}, &1+\gamma \geq 1 \\
\frac{1}{\gamma + 1} A_2^{1+\gamma}, &1+\gamma \in (0,1),
\end{cases}
\end{equation}
and if $A_1 \geq 2$
\begin{equation}\label{eq:intvergleichlow}
\sum\limits_{m=A_1}^{A_2} m^\gamma \leq \begin{cases}
\log(A_2), &1+\gamma = 0 \\
\frac{1}{|\gamma + 1|} \left(A_1 - 1 \right)^{1+\gamma}, &1+\gamma <0.
\end{cases}
\end{equation}
\end{lemma}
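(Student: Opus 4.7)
The strategy is a textbook integral–sum comparison: bound each summand $m^\gamma$ by a unit-length integral of $x^\gamma$, then telescope and discard the favorable endpoint. The only subtlety is choosing on which side of $m$ to place the unit interval, which depends on the monotonicity of $x\mapsto x^\gamma$ on $[1,\infty)$.

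First, I would split according to the sign of $\gamma$. If $\gamma \geq 0$, then $x^\gamma$ is non-decreasing on $[1,\infty)$, so $m^\gamma \leq \int_m^{m+1} x^\gamma\,dx$ for every $m\in \N$. Summing from $A_1$ to $A_2$ gives
\[
\sum_{m=A_1}^{A_2} m^\gamma \;\leq\; \int_{A_1}^{A_2+1} x^\gamma\,dx \;=\; \frac{(A_2+1)^{1+\gamma}-A_1^{1+\gamma}}{1+\gamma} \;\leq\; \frac{(A_2+1)^{1+\gamma}}{1+\gamma},
\]
which covers the first line of \eqref{eq:intvergleichlarge}. If $\gamma<0$, the function $x^\gamma$ is non-increasing on $(0,\infty)$, so $m^\gamma \leq \int_{m-1}^{m} x^\gamma\,dx$ for every $m\in \N$ (the integral being understood as improper and convergent when $m=1$ and $1+\gamma>0$). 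Summing yields
\[
\sum_{m=A_1}^{A_2} m^\gamma \;\leq\; \int_{A_1-1}^{A_2} x^\gamma\,dx .
\]

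From this single inequality I would deduce the remaining three cases. For $\gamma \in (-1,0)$, the right-hand side equals $\frac{A_2^{1+\gamma}-(A_1-1)^{1+\gamma}}{1+\gamma}$, and since $(A_1-1)^{1+\gamma}\ge 0$ (with the convention $0^{1+\gamma}=0$ in case $A_1=1$), dropping this nonnegative term gives the second line of \eqref{eq:intvergleichlarge}. For $\gamma=-1$, the integral is $\log A_2-\log(A_1-1)$, which for $A_1\geq 2$ is at most $\log A_2$ because $A_1-1\geq 1$. For $\gamma<-1$, the integral evaluates to $\frac{A_2^{1+\gamma}-(A_1-1)^{1+\gamma}}{1+\gamma}=\frac{(A_1-1)^{1+\gamma}-A_2^{1+\gamma}}{|1+\gamma|}$; discarding the nonnegative term $A_2^{1+\gamma}/|1+\gamma|$ delivers the bound in \eqref{eq:intvergleichlow}.

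There is no genuine obstacle here; the task is purely bookkeeping. The only item worth flagging is the $\gamma\in(-1,0)$, $A_1=1$ corner case, where one uses that $\int_0^1 x^\gamma\,dx = \frac{1}{1+\gamma}$ converges and $1^\gamma = 1 \leq \frac{1}{1+\gamma}$, so the monotone comparison remains valid at the left endpoint. With this case handled, the four stated inequalities follow immediately from the two monotonicity observations above.
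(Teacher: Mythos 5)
Your proof is correct and takes essentially the same integral--sum comparison approach as the paper: bound $m^\gamma$ by $\int_m^{m+1}x^\gamma\,dx$ for $\gamma\geq 0$ and by $\int_{m-1}^m x^\gamma\,dx$ for $\gamma<0$, telescope, and drop the favorable boundary term. The one small improvement is that you explicitly treat the corner case $A_1=1$, $\gamma\in(-1,0)$ via the convergent improper integral $\int_0^1 x^\gamma\,dx=\frac{1}{1+\gamma}\geq 1^\gamma$, whereas the paper's proof invokes $A_1\geq 2$ for the entire $\gamma<0$ branch and leaves that edge case of \eqref{eq:intvergleichlarge} implicit.
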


\begin{proof}
For $\gamma \geq 0$ we observe
\begin{align*}
\sum\limits_{m=A_1}^{A_2} m^\gamma \leq \sum\limits_{m=A_1}^{A_2} \int\limits_{m}^{m+1} x^\gamma \; \mathrm{d} x 
=\frac{1}{\gamma +1} \left( (A_2 +1)^{\gamma + 1} - A_1^{\gamma + 1} \right) \leq \frac{1}{\gamma + 1} \left(A_2 + 1 \right)^{\gamma +1}.
\end{align*}
In case of $\gamma < 0$ we have
\begin{align*}
\sum\limits_{m=A_1}^{A_2} m^\gamma \leq \sum\limits_{m=A_1}^{A_2}\; \int\limits_{m-1}^{m} x^\gamma \; \mathrm{d} x,
\end{align*}
since the mapping $x \mapsto x^\gamma$ is now decreasing for $x>0$. Assuming that $A_1 \geq 2$, we can proceed calculating the integral as above and obtain
\begin{equation*}
\sum\limits_{m=A_1}^{A_2} m^\gamma \leq \begin{cases} 
\frac{1}{\gamma+1} \left( A_2^{\gamma +1} - (A_1-1)^{\gamma + 1} \right) , & \gamma \in (-\infty,0)\setminus \{-1\} \\
\log(A_2) - \log(A_1-1) , & \gamma=-1.
\end{cases}
\end{equation*}
From this the claim follows.
\end{proof}

\begin{proof}[Proof of Theorem \ref{theo:approximation}]
Let $\beta \in (0,2)$ be fixed and choose $\varepsilon > 0$ such that $\beta - 2\varepsilon > 0$ and $\beta - \varepsilon >1$ in case of $\beta >1$. Let $u_\varepsilon$ be given as in the proof of Theorem \ref{betalessthan2}.

We define for {even} $N\in 2\mathbb{N}$ 
the function
\begin{align*}
v_{N,\varepsilon}(j):= \begin{cases}
u_\varepsilon(j) ,& j\in\{0,...,N\}\\
-\frac{N^{\beta - \varepsilon}}{N^2-N} j + \frac{N^{\beta - \varepsilon +2}}{N^2 - N} ,&j \in \{N+1,...,N^2\} \\
0 , & j >N^2,
\end{cases}
\end{align*}
and extend it symmetrically to  $v_{N,\varepsilon} : \mathbb{Z} \to \R$. In the sequel we will denote  $u_\varepsilon$ by $u$ and $v_{N,\varepsilon}$ by $v_N$.

Our aim is to prove that $\mathcal{L}(v_N)(0) \to \mathcal{L}(u)(0) $ and $\Gamma_2(v_N)(0) \to \Gamma_2 (u) (0)$ as $N \to \infty$, which is  sufficient to deduce the claim. Indeed, assuming that there exists some $\Lambda>0$ such that $\Gamma_2(w) \geq \Lambda(\mathcal{L}w)^2$ at $x = 0$ for all compactly supported functions $w$, we find from Theorem \ref{betalessthan2} a sufficiently small $\varepsilon>0$ with $u$, as given above,  satisfying $\Gamma_2(u)(0) \leq \frac{\Lambda}{8} (\mathcal{L} u(0))^2$. Let $N$ be large enough such that
$|(\mathcal{L}v_N(0))^2 - (\mathcal{L}u(0))^2| \leq \frac{1}{2} (\mathcal{L}u(0))^2$ and 
$|\Gamma_2(v_N)(0) - \Gamma_2(u)(0)| \leq \Gamma_2(u)(0)$.
Then $(\mathcal{L} u(0))^2 \leq 2 (\mathcal{L}v_N(0))^2$ and whence
\[
\Gamma_2(v_N)(0) \leq 2 \Gamma_2(u)(0) \leq \frac{\Lambda}{4} (\mathcal{L} u(0))^2 \leq \frac{\Lambda}{2}  (\mathcal{L} v_N(0))^2,
\]
contradicting the assumption and showing the claim.

First we obtain that
\begin{align*}
| \mathcal{L}(u)(0) - \mathcal{L}(v_N)(0) | \leq 2 \left( \sum\limits_{j=N+1}^{N^2} \frac{| u (j) - v_N(j)|}{j^{1+\beta}} + \!\! \sum\limits_{j=N^2+1}^\infty \frac{u(j)}{j^{1+\beta}} \right) \leq 2 \sum\limits_{j=N+1}^\infty \frac{u(j)}{j^{1+\beta}}  \; \xlongrightarrow{N \to \infty} 0.
\end{align*}
Next, we fix some $\rho>0$ and aim to prove $\vert \Gamma_2(v_N)(0) - \Gamma_2(u)(0) \vert < \rho$ for each $N \geq N_0(\rho)$ for some sufficiently large $N_0(\rho)\in \mathbb{N}$. For $M\in \N$ we define the kernel  $k_M(x) := \frac{1}{|x|^{1+\beta}}
\chi_{(\{-M,...,M\}\setminus \{0\})}$ and denote the corresponding operator from \eqref{genlaplacedef} by  $\mathcal{L}^M$. Furthermore, we denote by $\Gamma_2^M$ the respective iterated carr\'e du champ operator, which can be written as
\begin{align*}
\Gamma_2^M(w)(x)=\,\frac{1}{4}\,\sum_{j,l\in \{-M,...,M\}\setminus \{0\}} \frac{\big(w(x+j+l)-w(x+j)-w(x+l)+w(x)\big)^2}{j^{1+\beta} l^{1+\beta}} ,\quad x\in \iZ.
\end{align*}
From the dominated convergence theorem one obtains that $\Gamma_2^M (u)(0)$ converges to $\Gamma_2 (u)(0)$ as $M$ tends to infinity. We fix $M>0$ large enough such that 
\begin{equation*}
|\Gamma_2^M (u) (0) - \Gamma_2 (u) (0) | < \frac{\rho}{4}.
\end{equation*}
If $N > 2M$ and $|j| , |l| \leq M$ we have $( v_N(j+l)-v_N(j) - v_N(l))^2 = ( u(j+l) - u(j) - u(l) )^2$ and thus it holds
\begin{equation*}
|\Gamma_2^M (v_N)(0) - \Gamma_2^M (u)(0)| = 0.
\end{equation*}
The above observations are beneficial, due to the basic calculation
\begin{align*}
&| \Gamma_2 (v_N)(0) - \Gamma_2 (u)(0)| \\ 
& \qquad \leq | \Gamma_2 (v_N)(0) - \Gamma_2^M (v_N)(0)| + | \Gamma_2^M (v_N)(0) - \Gamma_2^M (u)(0) | + | \Gamma_2^M (u)(0) - \Gamma_2 (u) (0) |.
\end{align*}
Hence, to show the claim it suffices to prove that $| \Gamma_2 (v_N)(0) - \Gamma_2^M (v_N)(0)|$ converges to zero as $N$ tends to infinity.

In order to prove the desired convergence, we have to distinguish several cases. 
We observe by symmetry of the kernel and $v_N$ that 
\begin{equation}\label{eq:Gammatriangle}
\begin{split}
&| \Gamma_2 (v_N)(0) - \Gamma_2^M (v_N)(0)| \\
& \qquad \lesssim \sum\limits_{j=M+1}^\infty \sum\limits_{l=1}^j \frac{\left( v_N(j+l) - v_N(j)-v_N(l)\right)^2 + \left( v_N(j-l) - v_N(j)-v_N(l)\right)^2}{j^{1+\beta}l^{1+\beta}} .
\end{split}
\end{equation}
In order to show that the expression in \eqref{eq:Gammatriangle} converges to zero we distinguish several cases (see Figure \ref{fig:domain1} and \ref{fig:domain2}). First we make the following observation. Given $1 \leq x \leq y$ one easily obtains from the mean value theorem the upper bound
\begin{equation}\label{eq:nonmixedmvt}
\vert v_N(x)-v_N(y) \vert \lesssim \begin{cases}
\max\{ x^{\beta - \varepsilon -1},y^{\beta - \varepsilon -1} \} \left( y-x \right) , &y \leq N \\
N^{\beta - \varepsilon -2} \left( y - x\right) , &x > N,
\end{cases}
\end{equation}
Here and in the following we write $\lesssim$ if the corresponding constant  is independent of $N$ and $M$. In the remaining situation $x \leq N < y$, we apply \eqref{eq:nonmixedmvt} to obtain
\begin{align*}
\vert v_N(y)- v_N(x) \vert &\leq \vert v_N(y) - v_N(N) \vert + \vert v_N(N) - v_N(x) \vert \\
&\lesssim N^{\beta - \varepsilon -2} (y-N) + \max\{N^{\beta - \varepsilon -1}, x^{\beta -\varepsilon -1} \} (N-x).
\end{align*}
If $\beta >1$ this leads to 
\begin{equation}\label{eq:mixedgeq1}
\vert v_N(y)- v_N(x) \vert \lesssim N^{\beta - \varepsilon -1} (y-x),
\end{equation}
since $N^{\beta - \varepsilon -2} \leq N^{\beta - \varepsilon -1}$. In the other case of $\beta \leq 1$, we have
\begin{equation}\label{eq:mixedleq1}
\vert v_N(y)- v_N(x) \vert \lesssim  x^{\beta - \varepsilon -1} (y-x),
\end{equation}
since $N^{\beta - \varepsilon -2} \leq \frac{1}{N}x^{\beta - \varepsilon -1} \leq x^{\beta - \varepsilon -1}$.
\begin{figure}[ht]
 \centering
\begin{tikzpicture}[scale = 1]
\footnotesize
 \draw (0,0) -- (5,5);
\draw[->,thick] (0,-0.3) -- (0,5.4);
 \draw[->, thick] (-0.3,0) -- (6.4,0);
 \draw (1,0) -- (1,1);
 \draw (1.5,1.5) -- (3,0)-- (3,3);
\node at (1.7,0.5) {I};
\node at (2.4,1.3) {II};
\node at (4,2.5) {III};
 \node  at (-0.4, 4) {$l$};
 \node  at (5, -0.4) {$j$};
 \node  at (1,-0.4) {M};
 \node  at (3,-0.4) {N};
   \node (n4c) at (-0.5,3) {N};
   \node (n4c) at (0,3) {-};
      \node (n4c) at (-0.5,1) {M};
   \node (n4c) at (0,1) {-};
\end{tikzpicture}
 \caption{Splitting of the domain of summation for the term including $j+l$.}\label{fig:domain1}
\end{figure}
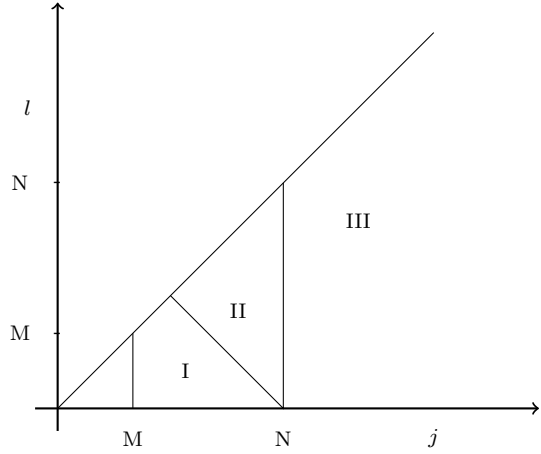\\
\underline{I: $1\leq l \leq j, j+l \leq N, j \geq M+1$:} Clearly, if $j+l \leq N$ we have $\left(v_N(j+l)-v_N(j)-v_N(l)\right)^2 = \left(u(j+l)-u(j)-u(l)\right)^2 $. We can control this case  by the sum
\begin{equation*}
\sum\limits_{j=M+1}^{N-1} \sum\limits_{l=1}^{N-j} \frac{\left( v_N(j+l) - v_N(j)-v_N(l)\right)^2}{j^{1+\beta}l^{1+\beta}}  \leq |\Gamma_2^M (u) (0) - \Gamma_2 (u) (0) | < \frac{\rho}{4}.
\end{equation*}
\underline{II: $1 \leq l \leq j \leq N, j+l \geq N+1$:} We will make use of the basic estimate 
\begin{align*}
\left(v_N(j\pm l)-v_N(j)-v_N(l)\right)^2 \lesssim v_N(l)^2 + \left( v_N(j\pm l) - v_N(j)\right)^2 .
\end{align*}
Observe that 
\begin{align*}
\sum\limits_{j=\frac{N}{2}}^\infty \frac{1}{j^{1+\beta}} \sum\limits_{l=1}^j \frac{v_N(l)^2}{l^{1+\beta}} \leq \sum\limits_{j=\frac{N}{2}}^\infty \frac{1}{j^{1+\beta}} \sum\limits_{l=1}^j \frac{l^{2\beta - 2\varepsilon}}{l^{1+\beta}} \lesssim  \sum\limits_{j= \frac{N}{2}}^\infty \frac{1}{j^{1+2\varepsilon}} \xlongrightarrow{N \to \infty} 0 ,
\end{align*}
where we applied \eqref{eq:intvergleichlarge} in the second step with $\gamma= \beta - 2\varepsilon -1$. Note that we will also apply this estimate to the term involving `$j-l$' later.
If $\beta>1$ we have by \eqref{eq:mixedgeq1}
\begin{align*}
\left( v_N(j+l) - v_N(j) \right)^2 \lesssim N^{2\beta - 2\varepsilon -2} l^2 \leq  \left(j+l\right)^{2\beta - 2\varepsilon -2}l^2 \leq \left( 2 j \right)^{2\beta - 2\varepsilon -2} l^2 .
\end{align*}
Due to \eqref{eq:mixedleq1}, we can thus conclude for any $\beta \in (0,2)$ it holds
\begin{align*}
\left( v_N(j+l) - v_N(j) \right)^2 \lesssim j^{2\beta-2\varepsilon-2} l^2  .
\end{align*}
Hence, we obtain with the help of \eqref{eq:intvergleichlarge} for $\gamma=1-\beta$ the upper bound
\begin{align*}
\sum\limits_{j= \frac{N}{2}+1}^N \frac{1}{j^{1+\beta}} \sum\limits_{l=N-j+1}^j \frac{\left( v_N(j+l)-v_N(j)\right)^2}{l^{1+\beta}} &\lesssim \sum\limits_{j=\frac{N}{2}+1}^N \frac{j^{2\beta - 2\varepsilon -2}}{j^{1+\beta}} \sum\limits_{l=1}^j l^{1-\beta} \lesssim \sum\limits_{j= \frac{N}{2}+1}^N \frac{1}{j^{1+2\varepsilon}}
\end{align*}
which tends to zero as $N \to \infty$.\\
\underline{III: $1\leq l \leq j, j \geq N+1$:} Since $N < j \leq j+l$ we observe by \eqref{eq:nonmixedmvt}
\begin{align*}
\sum\limits_{j=N+1}^\infty \frac{1}{j^{1+\beta}} \sum\limits_{l=1}^j \frac{\left(v_N(j+l)-v_N(j)\right)^2}{l^{1+\beta}} 
&\lesssim N^{2\beta - 2\varepsilon -4} \!\sum\limits_{j=N+1}^{N^2-1} \frac{1}{j^{1+\beta}} \sum\limits_{l=1}^j l^{1-\beta}
\lesssim N^{2\beta - 2\varepsilon -4}\! \sum\limits_{j=N+1}^{N^2-1} j^{1-2\beta},
\end{align*}
where we applied \eqref{eq:intvergleichlarge} in the last step. According to \eqref{eq:intvergleichlarge} and \eqref{eq:intvergleichlow} we find
\begin{align*}
\sum\limits_{j=N+1}^{N^2-1} j^{1-2\beta} \lesssim \begin{cases} N^{4- 4\beta} , & \beta < 1 \\
\log(N^2), & \beta = 1\\
N^{2-2\beta}, & \beta > 1.
\end{cases}
\end{align*}
Multiplying each of these expressions by $N^{2\beta- 2\varepsilon -4}$ yields the desired convergence  for any $\beta \in (0,2)$. Hence, the first part of \eqref{eq:Gammatriangle} converges to zero as $N$ tends to infinity.

For the estimates of the second part, we need a refined splitting of the domain, sketched in Figure \ref{fig:domain2}.
\begin{figure}[ht]
 \centering
\begin{tikzpicture}[scale = 1]
\footnotesize
 \draw (0,0) -- (7,7);
\draw[->,thick] (0,-0.3) -- (0,7);
 \draw[->, thick] (-0.3,0) -- (8,0);

 \draw (1,0) -- (1,1);
 \draw (3,0)-- (3,3) -- (8,3);
\draw (3,0) -- (6,3);

\node at (2,1) {A};
\node (B) at (5,6) {B};
\node at (4.0,1.8) {C};
\node at (7,1.5) {D};
\node at (7,4.5) {E};

\draw[->] (B) to [bend right](5,5);

 \node  at (-0.4, 5) {$l$};
 \node  at (7, -0.4) {$j$};
 \node  at (1,-0.4) {M};
 \node  at (3,-0.4) {N};
   \node  at (-0.5,3) {N};
   \node at (0,3) {-};
      \node  at (-0.5,1) {M};
   \node at (0,1) {-};
\end{tikzpicture}
 \caption{Splitting of the domain of summation for the term with $j-l$.}\label{fig:domain2}
\end{figure}
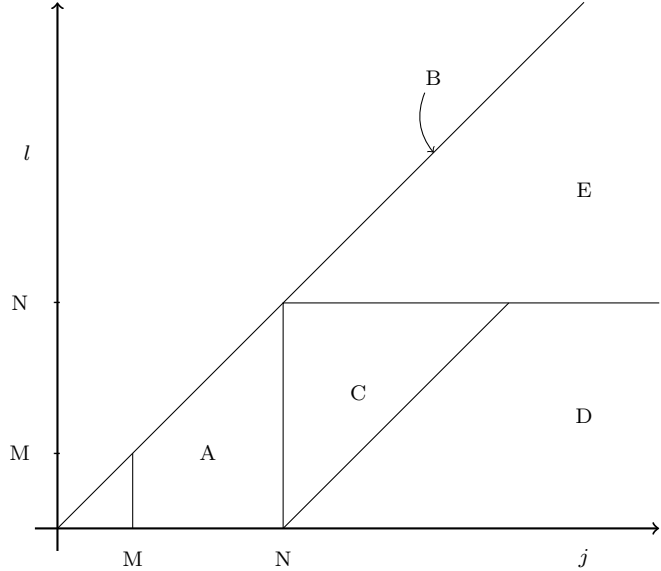\\
\underline{A: $1\leq l \leq j \leq N, j \geq M+1$:} Since $j-l \leq N$ it follows similar to the case I that
\begin{equation*}
\sum\limits_{j=M+1}^N \sum\limits_{l=1}^j \frac{\left( v_N(j-l) - v_N(j)-v_N(l)\right)^2}{j^{1+\beta}l^{1+\beta}}  \leq |\Gamma_2^M (u) (0) - \Gamma_2 (u) (0) |  < \frac{\rho}{4}.
\end{equation*}
\underline{B: $l= j \geq N+1$:} 
In this situation  we have  due to $v_N(j)^2 \leq u(j)^2$ the estimate
\begin{align*}
\sum\limits_{j=N+1}^\infty \frac{\left(v_N(0)- 2v_N(j)\right)^2}{j^{2+2\beta}} = 4 \sum\limits_{j=N+1}^\infty \frac{1}{j^{2+2\beta}} v_N(j)^2 \leq 4 \sum\limits_{j=N+1}^\infty \frac{1}{j^{2+2\varepsilon}} \xlongrightarrow{N \to \infty} 0.
\end{align*}
\underline{C, D, E: $1 \leq l \leq j-1,  j \geq N+1$, where  $\beta>1$:} We observe in this case by \eqref{eq:mixedgeq1} and \eqref{eq:intvergleichlarge}
\begin{align*}
&\sum\limits_{j=N+1}^\infty \frac{1}{j^{1+\beta}} \sum\limits_{l=1}^j \frac{\left( v_N(j) - v_N(j-l)\right)^2}{l^{1+\beta}} \lesssim N^{2\beta - 2\varepsilon -2} \sum\limits_{j=N+1}^\infty \frac{1}{j^{1+\beta}} \sum_{l=1}^j l^{1-\beta} \\
 & \qquad \lesssim N^{2\beta -2\varepsilon -2} \sum\limits_{j=N+1}^\infty \frac{1}{j^{2\beta -1}} \lesssim \frac{1}{N^{2\varepsilon}}  \to 0 \quad  (N \to \infty),
\end{align*}
where we applied $\beta > 1$ and \eqref{eq:intvergleichlow} in the last step. Recalling the estimates from case II, we have established the claim for $\beta>1$.
We assume from now on $\beta \leq 1$.\\
\underline{C: $1 \leq l \leq N,  j \geq N+1,  j-l \leq N$, where $\beta\leq1$:} Due to \eqref{eq:mixedleq1} we observe
\begin{align*}
\sum\limits_{j=N+1}^{2N} \frac{1}{j^{1+\beta}} \sum\limits_{l=j-N}^{N} \frac{\left( v_N(j) - v_N(j-l)\right)^2}{l^{1+\beta}} &\lesssim  \sum\limits_{j=N+1}^{2N}\frac{1}{j^{1+\beta}} \sum\limits_{l=j-N}^N l^{1-\beta} \left(j-l\right)^{2\beta - 2\varepsilon -2}\\
 &\lesssim N^{1-\beta} \sum\limits_{j=N+1}^{2N} \frac{1}{j^{1+\beta}} \sum\limits_{m=j-N}^{N} m^{2\beta - 2\varepsilon -2}.
\end{align*}
In order to treat the latter sum, we proceed with a finer {case} separation. Note that we can exclude the case of $2\beta-2\varepsilon -1 = 0$ by choosing $\varepsilon>0$ appropriately small. If $2 \beta -2\varepsilon -1 > 0$ we can apply \eqref{eq:intvergleichlarge} and prove its convergence to zero similar as before. 
To conclude convergence in the case of $2 \beta -2\varepsilon -1 < 0$  we  {split} the expression and use \eqref{eq:intvergleichlarge} and \eqref{eq:intvergleichlow} as follows:
\begin{align*}
N^{1-\beta} \sum\limits_{j=N+1}^{2N} &\frac{1}{j^{1+\beta}} \sum\limits_{m=j-N}^{N} m^{2\beta - 2\varepsilon -2}\\ 
&= N^{1-\beta} \left(\sum\limits_{j=N+2}^{2N} \frac{1}{j^{1+\beta}} \sum\limits_{m=j-N}^{N} m^{2\beta - 2\varepsilon -2} + \frac{1}{(N+1)^{1+\beta}} \left(1+\sum\limits_{m=2}^{N} m^{2\beta - 2\varepsilon -2} \right) \right)\\
&\lesssim N^{1-\beta}\!\! \sum\limits_{j=N+2}^{2N} \frac{\left(j-N-1\right)^{2\beta -2\varepsilon -1}}{j^{1+\beta}} + \frac{N^{1-\beta}}{N^{1+ \beta}} \\
&\lesssim \frac{1}{N^{2\beta}} \sum\limits_{m=1}^{N-1}m^{2\beta - 2\varepsilon -1} + \frac{2}{N^{2\beta}}
\lesssim \frac{1}{N^{2\varepsilon}} + \frac{1}{N^{2\beta}} \xlongrightarrow{N \to \infty} 0.
\end{align*}
\underline{D: $1 \leq l \leq N < j$, $j-l \geq N+1$, where $\beta\leq1$:} Here we have the comfortable situation that $j , j-l > N$. Therefore, we can apply \eqref{eq:nonmixedmvt} and observe
\begin{align*}
\sum\limits_{l=1}^N \frac{1}{l^{1+\beta}} \sum\limits_{j=N+l+1}^\infty \frac{\left(v_N(j)-v_N(j-l)\right)^2}{j^{1+\beta}} 
&\lesssim N^{2\beta - 2\varepsilon -4} \sum\limits_{l=1}^N l^{1-\beta} \sum\limits_{j=N+l+1}^\infty \frac{1}{j^{1+\beta}} \\
&\lesssim N^{\beta - 2\varepsilon -4} \sum\limits_{l=1}^N l^{1-\beta}\lesssim \frac{1}{N^{2+2\varepsilon}} \xlongrightarrow{N \to \infty} 0,
\end{align*}
where we applied \eqref{eq:intvergleichlow} to the inner sum and \eqref{eq:intvergleichlarge} to the outer sum. Due to the arguments of the case II, we establish the claim in this situation.\\
\underline{E: $N+1 \leq l \leq j-1,  j \geq N+1$, where $\beta\leq1$:} Since $\max\limits_{x \in \mathbb{Z}} v_N(x) = N^{\beta -\varepsilon}$ we can estimate
\begin{align*}
\sum\limits_{j=N+1}^\infty \frac{1}{j^{1+\beta}} \sum\limits_{l=N+1}^{j-1} \frac{\left(v_N(j-l) - v_N(j) -v_N(l) \right)^2}{l^{1+\beta}} &\lesssim N^{2\beta - 2\varepsilon} \sum\limits_{j=N+1}^\infty \frac{1}{j^{1+\beta}} \sum\limits_{l=N+1}^{j-1} \frac{1}{l^{1+\beta}} \lesssim \frac{1}{N^{2\varepsilon}}, 
\end{align*}
where we applied \eqref{eq:intvergleichlow} to both sums.
Whence, we find $| \Gamma_2 (v_N)(0) - \Gamma_2^M (v_N)(0)| < \frac{3 \rho}{4}$ for $N \geq  N_0(\rho) $ and thus conclude the claim.
\end{proof}

\section{Kernels with finite second moment}
We have seen in the previous section that for power type kernels with $\beta<2$ the inequality $CD(0,d)$ fails for all
finite $d>0$. It turns out that a positive result can be obtained if $\beta>2$. This is a consequence of the following theorem,
where an even more general class of kernels is admissible.  Recall that we always assume that the kernel $k$ is subject to the conditions (K1) and (K2) with $|k|_1>0$. 
\begin{satz} \label{posresult}
Let $k$ be a kernel with finite second moment, that is
\[
\sum_{j\in \iN} k(j) j^2<\infty,
\]
and assume that $k$ is non-increasing on $\iN$. Then the corresponding operator $\cL$ satisfies $CD(0,d)$ for some finite $d > 0$ of the form \[d=C_0 \cdot \displaystyle \frac{|k|_1 \sum_j k(j)j^2}{k(1)^2},\]
where $C_0>0$ is some constant independent of $k$.
\end{satz}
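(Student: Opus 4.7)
The plan is to reduce by symmetry and then to bound $(\cL u(0))^2$ via a Cauchy--Schwarz that exactly exhausts the second moment, leaving a weighted Hardy-type estimate whose proof is the technical core. By Lemma \ref{lem:symmetry} it suffices to establish the inequality at $x=0$ for every admissible symmetric $u$ with $u(0)=0$; for such $u$, $\cL u(0) = 2\sum_{j\ge 1}k(j)u(j)$. Writing $u(j) = j\cdot (u(j)/j)$ for $j \ge 1$ and applying Cauchy--Schwarz yields
\[
\bigl(\cL u(0)\bigr)^2 \ \le\ 4\Big(\sum_{j\ge 1} k(j) j^2\Big)\Big(\sum_{j\ge 1}\frac{k(j) u(j)^2}{j^2}\Big) \ =\ 2 M_2\,S(u),
\]
where $M_2 := \sum_{j\in\iZ}k(j)j^2$ and $S(u) := \sum_{j\ge 1} k(j) u(j)^2/j^2$. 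The theorem therefore reduces to the Hardy-type estimate
\[
S(u) \ \le\ \frac{C\,|k|_1}{2k(1)^2}\,\Gamma_2(u)(0),
\]
which, substituted into the previous display, produces $CD(0,d)$ with $d = C\,|k|_1 M_2/k(1)^2$ of the stated form.

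To establish the Hardy-type estimate I would introduce the one-sided differences $a_m := u(m) - u(m-1)$ and the second-order quantity $\psi(m) := u(m+1)-u(m)-u(1)$. These are linked by $a_m = u(1) + \psi(m-1)$ with $\psi(0) = 0$, and $\psi$ is exactly the ingredient of the $l=\pm 1$ slice of \eqref{Gamma2Formel}, which supplies
\[
\sum_{m\ge 1}k(m)\,\psi(m)^2 \ \le\ \frac{4\Gamma_2(u)(0)}{k(1)}.
\]
Using $u(j) = \sum_{m=1}^j a_m$ and Cauchy--Schwarz one has $u(j)^2/j \le \sum_{m=1}^j a_m^2$, and swapping the order of summation yields
\[
S(u)\ \le\ \sum_{m\ge 1} a_m^2\,h(m),\qquad h(m) := \sum_{j\ge m}\frac{k(j)}{j}.
\]
The elementary identity $\sum_{m\ge 1}h(m) = |k|_1/2$, combined with \eqref{basicGamma2Est} in the form $u(1)^2 \le \Gamma_2(u)(0)/(2k(1)^2)$, handles the $u(1)^2$-part of the expansion $a_m^2 \le 2u(1)^2 + 2\psi(m-1)^2$ and already contributes $|k|_1/(2k(1)^2)\cdot \Gamma_2(u)(0)$ to $S(u)$, matching the shape of the target constant.

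The main obstacle I expect is the remaining $\psi$-part $\sum_{m\ge 1}h(m+1)\psi(m)^2$: the naive pointwise comparison $h(m) \le C\,k(m)$ fails for slowly decaying kernels, including precisely the algebraic kernels with $\beta>2$ that motivate the theorem, so the bound must be extracted after summation. This is where the finiteness of the second moment enters crucially: Chebyshev's inequality yields $K(m) := \sum_{j\ge m}k(j) \le M_2/(2m^2)$, whence $h(m) \le K(m)/m \le M_2/(2m^3)$. I would split the $m$-sum at a scale where this tail estimate becomes sharper than the coarse bound $h(m+1)\le |k|_1/(2(m+1))$ and pair each regime with the $\Gamma_2$-input on $\psi$, using monotonicity of $k$ (so that $k(m) \ge k(m^\star)$ on the inner region) to perform a uniform comparison on that region and the sharper second-moment tail on the outer region. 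Carefully tracking constants through this splitting should produce the required form $C|k|_1/k(1)^2 \cdot \Gamma_2(u)(0)$; matching constants with the preceding Cauchy--Schwarz then gives the stated value $d = C_0|k|_1 M_2/k(1)^2$.
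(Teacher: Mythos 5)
Your opening move is identical to the paper's Step~1: Cauchy--Schwarz with weights $\sqrt{k(j)}\,j$ reduces the problem to the weighted Hardy-type estimate $S(u):=\sum_{j\ge 1}\frac{k(j)}{j^2}u(j)^2\lesssim\frac{|k|_1}{k(1)^2}\Gamma_2(u)(0)$, and the $l=\pm 1$ slice of $\Gamma_2$ is the paper's Step~2. The route you then take to the Hardy estimate is, however, genuinely different from the paper's: you write $u(j)=\sum_{m\le j}a_m$, apply Cauchy--Schwarz in the form $u(j)^2\le j\sum_{m\le j}a_m^2$, and use Fubini to arrive at $S(u)\le\sum_m h(m)a_m^2$ with $h(m)=\sum_{j\ge m}k(j)/j$; the paper instead telescopes $\frac{k(j)}{j}u(j+1)^2$ directly with Young's inequality and uses the clean algebraic consequence of monotonicity $\frac{k(j)}{j}-\frac{k(j+1)}{j+1}\ge\frac{k(j+1)}{j(j+1)}\ge\frac{k(j+1)}{(j+1)^2}$ to extract the weight $\frac{k(j+1)}{(j+1)^2}$.

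Your route has a genuine gap, and it is exactly the ``main obstacle'' you flag. The estimate you need,
\begin{equation*}
\sum_{m\ge 1}h(m+1)\psi(m)^2\;\le\;C\,\frac{|k|_1}{k(1)^2}\,\Gamma_2(u)(0),
\end{equation*}
is false in general for non-increasing kernels with finite second moment and symmetric $u$ with $u(0)=0$, and no splitting into inner and outer regions can repair it because the full left-hand side already dominates the upper bound you have derived for $S(u)$. Concretely, fix a large $m_0$ and take $k(1)=1$, $k(j)=m_0^{-3}$ for $2\le j\le m_0$, $k(j)=0$ for $j>m_0$ (extended symmetrically); this $k$ is non-increasing, $|k|_1\to 2$ and $\sum_j k(j)j^2\to 8/3$. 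Take $u(j)=\mathbbm{1}_{\{|j|\ge 3\}}$, so $u(1)=0$ and $\psi(m)=\mathbbm{1}_{\{m=2\}}$. Then $\sum_m h(m+1)\psi(m)^2=h(3)=m_0^{-3}\sum_{j=3}^{m_0}j^{-1}\sim m_0^{-3}\log m_0$, whereas $\Gamma_2(u)(0)\sim 2m_0^{-3}$ (the dominant contributions come from the eight pairs $(j,l)$ with one index $\pm1$ and the other in $\{\pm2,\pm3\}$, each giving $k(j)k(l)=m_0^{-3}$ and $(u(j+l)-u(j)-u(l))^2=1$). Hence $\sum_m h(m+1)\psi(m)^2\big/\Gamma_2(u)(0)\sim\frac12\log m_0\to\infty$ while $|k|_1/k(1)^2$ stays bounded. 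Notice that the target $S(u)\lesssim\frac{|k|_1}{k(1)^2}\Gamma_2(u)(0)$ is perfectly true here, since $S(u)=m_0^{-3}\sum_{j=3}^{m_0}j^{-2}\sim\frac13 m_0^{-3}$: the loss occurs already in your intermediate step $u(j)^2\le j\sum_{m\le j}a_m^2$, which is very wasteful when $u$ is a step function (for this $u$ the left side equals $\mathbbm{1}_{\{j\ge 3\}}$ but the right side is $j\cdot\mathbbm{1}_{\{j\ge 3\}}$). This is precisely what the paper's telescoping of $\frac{k(j)}{j}u(j+1)^2$ avoids. As a minor further point: your remark that the pointwise bound $h(m)\le C\,k(m)$ ``fails for the algebraic kernels with $\beta>2$'' is not accurate --- for $k(j)=j^{-(1+\beta)}$ one has $h(m)\le k(m)$ for all $m\ge 2$ and $\beta\ge 2$; it is nearly flat kernels such as the one above for which the pointwise comparison really fails.
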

\begin{proof} 
The proof consists of several steps.
By Lemma \ref{lem:symmetry}, we may assume that $u(0)=0$ as well as $u(-j)=u(j)$ for all $j\in \iZ$.

{\em Step 1: H\"older estimate}. Using that $k$ has finite second moment, we have by H\"older's inequality that
\begin{align}
\big(\cL(u)(0)\big)^2 & =4 \big(\sum_{j=1}^\infty k(j)u(j)\big)^2= 4 \big(\sum_{j=1}^\infty 
\sqrt{k(j)}j\, \frac{\sqrt{k(j)} u(j)}{j} \big)^2\nonumber\\
& \le 4 \sum_{j=1}^\infty 
k(j) j^2 \sum_{j=1}^\infty \frac{k(j)}{j^2}\,u(j)^2
= C(k) \sum_{j=1}^\infty  \frac{k(j)}{j^2}\,u(j)^2. \label{beta>2Hoelder} 
\end{align}

{\em Step 2: Basic lower estimate for $\Gamma_2$}. Evidently,
\[
2 \Gamma_2(u)(0)\ge \sum_{j=1}^\infty k(1)k(j) \big(u(j+1)-u(j)-u(1)\big)^2,
\]
and therefore
\begin{align}
\sum_{j=1}^\infty k(j) \big(u(j+1)-u(j)\big)^2 &  \le 2\sum_{j=1}^\infty k(j) \big(u(j+1)-u(j)-u(1)\big)^2 
+2\sum_{j=1}^\infty k(j) u(1)^2\nonumber\\
& \le \,\frac{4}{k(1)}\,\Gamma_2(u)(0)+|k|_1 u(1)^2,\label{beta>2step2a}
\end{align}
where we use the fact that 
\[
a^2=(a-b+b)^2\le 2(a-b)^2+2b^2,\quad a,b\in \iR.
\]
From the basic estimate \eqref{basicGamma2Est} we know that $2k(1)^2 u(1)^2\le \Gamma_2(u)(0)$, which together
with \eqref{beta>2step2a} implies
\begin{equation} \label{beta>2step2b}
\sum_{j=1}^\infty k(j) \big(u(j+1)-u(j)\big)^2 \le \big(\frac{4}{k(1)}+\frac{|k|_1}{2k(1)^2}   \big) \Gamma_2(u)(0).
\end{equation}
Observe that the assumptions on the kernel ensure that $k(1)>0$.

{\em Step 3: Estimating a squared weighted $l_2$-norm of  $u$ by $\Gamma_2$}. Let $N\in \iN$ with $N\ge 2$. For any $\delta>0$ we have by Young's inequality and 
\eqref{beta>2step2b} that
\begin{align*}
\sum_{j=1}^N & \frac{k(j)}{j}\,u(j+1)^2=
\sum_{j=1}^N \frac{k(j)}{j}\,\big(u(j+1)-u(j)+u(j)\big)^2\\
& \le \sum_{j=1}^N \frac{k(j)}{j}\,\Big[\big(1+\frac{j}{\delta}\big)\big(u(j+1)-u(j)\big)^2+\big(1+\frac{\delta}{j}\big) u(j)^2\Big]\\
& \le \sum_{j=1}^N  k(j)\,\big(1+\frac{1}{\delta}\big)\big(u(j+1)-u(j)\big)^2+k(1)\,(1+\delta)
u(1)^2\\
& \quad +\sum_{j=1}^{N-1} \frac{k(j+1)}{j+1}\,\big(1+\frac{\delta}{j+1}\big) u(j+1)^2\\
& \le M_\delta \Gamma_2(u)(0)
+\sum_{j=1}^{N} \frac{k(j+1)}{j+1}\,\big(1+\frac{\delta}{j+1}\big) u(j+1)^2,
\end{align*}
with
\[
M_\delta=\big(1+\frac{1}{\delta}\big)\big(\frac{4}{k(1)}+\frac{|k|_1}{2k(1)^2}\big)
+\frac{1}{2k(1)}\,(1+\delta).
\]
This implies that
\begin{equation} \label{beta>2step3a}
\sum_{j=1}^N \Lambda(j,\delta)
\,u(j+1)^2\le M_\delta \Gamma_2(u)(0),
\end{equation}
where
\begin{align*}
\Lambda(j,\delta) & =\frac{k(j)}{j}-\frac{k(j+1)}{j+1}-\delta\,\frac{ k(j+1)}{(j+1)^2}\\
& \ge k(j+1)\Big(\frac{1}{j}-\frac{1}{j+1}-\frac{\delta}{(j+1)^2} \Big)\\
& \ge  k(j+1)\,\frac{1-\delta}{(j+1)^2},
\end{align*}
by monotonicity of the kernel $k$.
Choosing $\delta\in (0,1)$, it follows from \eqref{beta>2step3a} that
\[
\sum_{j=1}^N \frac{k(j+1)}{(j+1)^2}
\,u(j+1)^2\le \frac{M_\delta}{1-\delta}\, \Gamma_2(u)(0).
\]
Sending $N\to \infty$ we obtain
\begin{equation} \label{beta>2step3b}
\sum_{j=1}^\infty  \frac{k(j+1)}{(j+1)^2}
\,u(j+1)^2\le \frac{M_\delta}{1-\delta}\, \Gamma_2(u)(0).
\end{equation}

{\em Step 4: Combining the estimates from Step 1 and 3}. Using \eqref{beta>2Hoelder}, \eqref{beta>2step3b}
and $2k(1)^2 u(1)^2\le \Gamma_2(u)(0)$
we obtain
\begin{align*}
\big(\cL(u)(0)\big)^2 & \le  C(k) \Big( k(1)\,u(1)^2 + \sum_{j=1}^\infty \frac{k(j+1)}{(j+1)^2}\,u(j+1)^2\Big)\\
& \le C(k) \Big(\frac{1}{2 k(1)}+\frac{M_\delta}{1-\delta} \Big)  \Gamma_2(u)(0).
\end{align*}
The proof of Theorem \ref{posresult} is complete.
\end{proof}

Theorem \ref{posresult} covers a wide class of non-increasing (on $\iN$) kernels which decay sufficiently fast. We illustrate this in the following corollary. 
\begin{korollar} \label{posexamples}
If the kernel $k$ belongs to one of the subsequent classes of kernels, the corresponding 
operator $\cL$ satisfies the $CD(0,d)$-inequality for some finite $d>0$.
\begin{itemize}
\item[(i)] {\em Power type kernels}: 
\[
k(j)=\frac{c}{|j|^{1+\beta}},\quad j\in \iZ\setminus \{0\},
\]
where $c>0$ and $\beta>2$.
\item[(ii)] {\em Exponential kernels}:
\[
k(j)=c \,e^{- \delta |j|^\alpha},\quad j\in \iZ\setminus \{0\},
\]
where $c,\alpha,\delta>0$. 
\item[(iii)] {\em Mixed exponential and power type:}
\[
k(j)=c \,\frac{e^{- \delta |j|}}{|j|^\alpha},\quad j\in \iZ\setminus \{0\},
\]
where $c,\alpha,\delta>0$. 
\end{itemize}
\end{korollar}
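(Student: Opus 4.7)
The plan is to invoke Theorem \ref{posresult} directly: for each of the three families of kernels it is enough to verify the two hypotheses of that theorem, namely monotonicity of $k$ on $\iN$ and finiteness of the second moment $\sum_{j\in\iN}k(j)j^{2}<\infty$. Once these are in hand, the $CD(0,d)$-inequality with a finite dimension $d$ (admitting even the explicit form given in the statement of Theorem \ref{posresult}) follows with no further work.

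First I would dispatch the monotonicity condition in each case. For (i), $j\mapsto c|j|^{-(1+\beta)}$ is strictly decreasing on $\iN$. For (ii), since $\alpha,\delta>0$, the map $j\mapsto \delta j^{\alpha}$ is strictly increasing on $\iN$, so $k(j)=c\exp(-\delta j^{\alpha})$ is strictly decreasing. For (iii) one can simply inspect the ratio $k(j+1)/k(j)=e^{-\delta}\bigl(\tfrac{j}{j+1}\bigr)^{\alpha}<1$, using $\alpha\ge 0$ and $\delta>0$.

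Next I would verify the finite second moment. In case (i) this amounts to the convergence of $c\sum_{j\in\iN}j^{1-\beta}$, which holds iff $\beta>2$, precisely the hypothesis. In case (ii), for any $\alpha,\delta>0$ the factor $e^{-\delta j^{\alpha}}$ decays faster than any negative power of $j$, so $e^{-\delta j^{\alpha}}j^{4}\to 0$ as $j\to\infty$ and the series $\sum_{j\in\iN}k(j)j^{2}$ is dominated, from some index on, by a constant multiple of $\sum j^{-2}$. In case (iii) the series becomes $c\sum_{j\in\iN}e^{-\delta j}j^{2-\alpha}$, whose convergence is immediate by the ratio (or root) test thanks to the exponential factor, regardless of the sign of $2-\alpha$.

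I do not anticipate any real obstacle: the verification is entirely elementary and all of the analytic work has already been absorbed into the proof of Theorem \ref{posresult}. The only small point worth flagging is case (iii), where relaxing $\alpha\ge 0$ to $\alpha<0$ would break global monotonicity on $\iN$; one would then need to appeal instead to the extension mentioned in Remark \ref{extensionposresult} that only requires eventual monotonicity. Within the stated hypotheses $c,\alpha,\delta>0$, however, nothing more is needed.
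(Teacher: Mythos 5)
Your proposal is correct and follows exactly the same route as the paper, which simply observes that all three kernel families satisfy the hypotheses (monotonicity on $\iN$ and finite second moment) of Theorem~\ref{posresult}; you merely spell out the elementary verifications in more detail than the paper does.
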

\begin{proof}
It is easily seen that all the described kernels satisfy the assumptions of Theorem \ref{posresult}.
\end{proof}
\begin{bemerk1}
{\em
Estimate \eqref{beta>2step3b} is one of the key estimates in the proof of Theorem \ref{posresult}. 
In the case of the power type kernel with $\beta>2$
it can be rephrased as
\[
\sum_{j=2}^\infty k(j)^{\frac{3+\beta}{1+\beta}}u(j)^2 \le M \Gamma_2(u)(0)
\]
(for symmetric $u$) with some constant $M>0$. This is a much stronger estimate than the basic inequality 
$\sum_{j=2}^\infty k(j)^2 u(j)^2 \le C \Gamma_2(u)(0)$ from \eqref{basicGamma2Est}, which holds for any kernel and symmetric $u$.
The difference becomes more apparent for large values of $\beta$, since the exponent $\frac{3+\beta}{1+\beta}$ tends to
$1$ as $\beta\to \infty$. 
}
\end{bemerk1}
\begin{bemerk1} \label{extensionposresult}
{\em The statement of Theorem \ref{posresult} (with adapted bound for $d$) remains true in the more general case where $k$ has a finite second moment 
but is merely assumed to be non-increasing for all $j\ge j_0$ with some fixed $j_0\in \iN$. 
The key idea to see this in the case $j_0>1$ is to split the sum
\begin{equation} \label{ext1}
\sum_{j=1}^{\infty} \frac{k(j+1)}{(j+1)^2}\,u(j+1)^2=\sum_{j=1}^{j_0-1} \frac{k(j+1)}{(j+1)^2}\,u(j+1)^2
+\sum_{j=j_0}^{\infty} \frac{k(j+1)}{(j+1)^2}\,u(j+1)^2.
\end{equation}
The second sum on the right-hand side can be estimated from above by $\Gamma_2(u)(0)$ analogously to the argument given in the proof of Theorem \ref{posresult}. Instead of the $u(1)^2$ term one has a term involving $u(j_0)^2$, which can be estimated by employing \eqref{basicGamma2Est}. The finite sum on the right of \eqref{ext1} can also be controlled from above by $\Gamma_2(u)(0)$, again by the basic estimate  \eqref{basicGamma2Est}. 
}
\end{bemerk1}
\begin{bei}
{\em 
Let $k$ be of the form
\[
k(j)=c\,|j|^\alpha e^{-\delta |j|},\quad j\in \iZ,
\]
with $c,\alpha,\delta>0$. Then the associated 
operator $\cL$ satisfies the $CD(0,d)$-inequality for some finite $d>0$. This is a consequence of Theorem \ref{posresult}
and Remark \ref{extensionposresult}.
}
\end{bei}

Theorem \ref{betalessthan2} and Theorem \ref{posresult} show that for power type kernels the value $\beta=2$ is a critical
case. While $\beta>2$ ensures $CD(0,d)$ for some finite $d>0$, the letter fails to be true in the case $\beta<2$.
So what happens in the case $\beta=2$? This is an interesting question, which we are not able to answer at the moment.
However, if we add to a power type kernel with $\beta=2$ a suitable logarithmic factor, then $CD(0,d)$ is still valid for some finite $d>0$ as the resulting kernel has a finite second moment and thus Theorem \ref{posresult} applies. We formulate
this observation in the following corollary.  
\begin{korollar} \label{critcase}
Suppose that the kernel $k$ is of the form
\[
k(j)=\,\frac{c}{|j|^{3}[1+\log(|j|)]^{1+\alpha}},\quad j\in \iZ\setminus \{0\},
\]
with $c,\alpha>0$. Then the corresponding 
operator $\cL$ satisfies the $CD(0,d)$-inequality for some finite $d>0$.
\end{korollar}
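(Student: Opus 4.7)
The plan is simply to verify that the kernel from the statement satisfies the hypotheses of Theorem \ref{posresult} and then invoke that theorem; no new technical work is needed.

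First I would check the standing assumptions (K1), (K2) and the sign/symmetry conditions. The function $j \mapsto c/(|j|^3 [1+\log|j|]^{1+\alpha})$ is manifestly nonnegative and symmetric in $j$, vanishes at $0$ by convention, and is summable since it is dominated by $c/|j|^3$, which has a convergent series over $\iZ \setminus \{0\}$.

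Next I would verify monotonicity on $\iN$: for $j \geq 1$, both $j^3$ and $[1+\log j]^{1+\alpha}$ are non-decreasing, hence so is their product, so $k(j)$ is non-increasing on $\iN$.

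The one step requiring any computation is the finite second moment condition, and this is a standard Bertrand-series calculation. We have
\[
\sum_{j=1}^{\infty} k(j) j^2 = c \sum_{j=1}^{\infty} \frac{1}{j [1+\log j]^{1+\alpha}},
\]
and by the integral comparison (substitute $u = 1+\log x$, $du = dx/x$) this is finite precisely because $\alpha > 0$, since $\int_1^\infty u^{-(1+\alpha)}\,du < \infty$. With (K1), (K2), monotonicity and the finite second moment in hand, Theorem \ref{posresult} applies and yields $CD(0,d)$ for some finite $d > 0$, completing the argument. There is no genuine obstacle here; the corollary is an immediate application and its only point is to highlight that the logarithmic correction of exponent strictly greater than $1$ is exactly what is needed to push the critical $\beta = 2$ case into the regime covered by Theorem \ref{posresult}.
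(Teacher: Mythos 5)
Your proof is correct and takes exactly the same approach as the paper: the corollary is stated without a separate proof precisely because it is an immediate application of Theorem~\ref{posresult}, with the only content being the (correctly carried-out) verification that the kernel is non-increasing on $\iN$ and has a finite second moment via the Bertrand-series comparison.
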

\section{Kernels with sparse unbounded support}
In \label{Sect:Sparse}this section we consider situations where the support of the kernel $k$ is unbounded but relatively small, in particular
$k$ is no longer non-increasing on $\iN$. We begin with two positive results. Throughout this section the support of $k$ 
is denoted by $S$ and $S_+=S\cap \iN$.
\begin{satz} \label{powersof2}
Assume that the support of the kernel $k$ is given by $S=\{\pm 2^l:\,l\in \iN_0\}$ and that $k$ satisfies the condition
\begin{equation} \label{S2}
\sum_{l\in S_+} \Big(\frac{k(2l)}{k(l)}\Big)^2\,<\infty.
\end{equation}
Then the corresponding operator $\cL$ satisfies $CD(0,d)$ for some finite $d>0$.
\end{satz}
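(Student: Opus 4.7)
My plan is to combine Lemma \ref{lem:symmetry} with the special algebraic structure $2^m+2^m=2^{m+1}$ of the support $S$. Setting $a_m:=k(2^m)$ for $m\in\iN_0$, and assuming (by Lemma \ref{lem:symmetry}) that $u$ is symmetric with $u(0)=0$, the task reduces to controlling $\bigl|\sum_{m\ge 0} a_m u(2^m)\bigr|$ by a constant multiple of $\sqrt{\Gamma_2(u)(0)}$, since $\mathcal{L}u(0)=2\sum_{m\ge 0} a_m u(2^m)$.

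I will extract two lower bounds for $\Gamma_2(u)(0)$ from \eqref{Gamma2Formel}. The first is the basic estimate \eqref{basicGamma2Est}, which gives $\sum_{m\ge 0} a_m^2 u(2^m)^2\le \tfrac12\Gamma_2(u)(0)$. The second comes from restricting the double sum to the same-sign diagonal pairs $(j,l)=(2^m,2^m)$ and $(-2^m,-2^m)$; these contribute
\[
\Gamma_2(u)(0)\ge \tfrac12\sum_{m\ge 0} a_m^2 d_m^2,\qquad d_m:=u(2^{m+1})-2u(2^m),
\]
which is the crucial extra input produced by the algebraic identity $2^m+2^m=2^{m+1}$.

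Using the recursion $u(2^{m+1})=2u(2^m)+d_m$ together with an index shift, I obtain the identity
\[
\tfrac12\mathcal{L}u(0)-a_0 u(1)=\sum_{m\ge 0} a_{m+1}u(2^{m+1})=2\sum_{m\ge 0} a_{m+1}u(2^m)+\sum_{m\ge 0} a_{m+1}d_m.
\]
To each of the two sums on the right I will apply the weighted Cauchy--Schwarz inequality
\[
\Bigl(\sum_{m\ge 0} a_{m+1} f_m\Bigr)^2\le \Bigl(\sum_{m\ge 0}\tfrac{a_{m+1}^2}{a_m^2}\Bigr)\Bigl(\sum_{m\ge 0} a_m^2 f_m^2\Bigr),
\]
taking $f_m=u(2^m)$ and $f_m=d_m$ respectively. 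Hypothesis \eqref{S2} is exactly the statement that the first factor equals a finite constant $C_1$, while the second factor is bounded by $\tfrac12\Gamma_2(u)(0)$ and by $2\Gamma_2(u)(0)$, respectively, thanks to the two lower bounds above. Combining with $a_0^2 u(1)^2\le\tfrac12\Gamma_2(u)(0)$ (once more from \eqref{basicGamma2Est}) and the elementary bound $(\alpha+\beta+\gamma)^2\le 3(\alpha^2+\beta^2+\gamma^2)$ yields $(\mathcal{L}u(0))^2\le d\,\Gamma_2(u)(0)$ for some finite $d$ depending only on $C_1$.

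The main obstacle is conceptual rather than computational: as the paper observes just before Theorem \ref{finite support}, the basic lower bound \eqref{basicGamma2Est} alone is too weak for sparse kernels, because $(\mathcal Lu(0))^2$ grows like the square of the number of active indices while $\sum a_m^2 u(2^m)^2$ grows only linearly in that number. The index shift $m\mapsto m+1$ above is the device that recasts each offending term as a product in which the ratio $a_{m+1}/a_m$ carries the summability, so that hypothesis \eqref{S2} is invoked exactly where needed to close the estimate.
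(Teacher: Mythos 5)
Your proof is correct and follows essentially the same route as the paper: you isolate the $u(1)$ term, use the diagonal pairs $(\pm 2^m,\pm 2^m)$ to bound $\sum a_m^2\bigl(u(2^{m+1})-2u(2^m)\bigr)^2$ by $\Gamma_2$, invoke \eqref{basicGamma2Est} for $\sum a_m^2 u(2^m)^2$, and apply Cauchy--Schwarz with the weight $a_{m+1}/a_m$ to import hypothesis \eqref{S2}. The only cosmetic difference is that the paper applies H\"older once to $\sum k(2j)u(2j)$ and then splits $u(2j)=2u(j)+(u(2j)-2u(j))$, whereas you split first and apply Cauchy--Schwarz to each piece separately.
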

\begin{proof}
By Lemma \ref{lem:symmetry}, we may assume that $u(-j)=u(j)$ for all $j\in \iN$ as well as $u(0)=0$. In this situation
\begin{align*}
\big(\cL(u)(0)\big)^2 & = \big(2\sum_{j\in S_+}k(2j)u(2j)+2k(1)u(1) \big)^2\\
& \le 8 \big(\sum_{j\in S_+}k(2j)u(2j)\big)^2+8k(1)^2 u(1)^2\\
& \le 8 \big(\sum_{j\in S_+}k(2j)u(2j)\big)^2+4\Gamma_2(u)(0),
\end{align*}
by the basic inequality \eqref{basicGamma2Est}. The first term on the right-hand side can be estimated by H\"older's
inequality using \eqref{S2} as follows.
\begin{align*}
\big(\sum_{j\in S_+}k(2j)u(2j)\big)^2 & =\big(\sum_{j\in S_+}\frac{k(2j)}{k(j)}\, k(j) u(2j)\big)^2 \le \sum_{j\in S_+}\Big(\frac{k(2j)}{k(j)}\Big)^2\, \sum_{j\in S_+}k(j)^2 u(2j)^2.
\end{align*}
Consequently, 
\begin{align} \label{powers2a}
\big(\cL(u)(0)\big)^2 & \le C\big( \sum_{j\in S_+}k(j)^2 u(2j)^2+\Gamma_2(u)(0)\big),
\end{align}
with some constant $C$ which only depends on the kernel.

Next, using the inequality $a^2\le 2(a-b)^2+2b^2$, $a,b\in \iR$, we have
\begin{align*}
\sum_{j\in S_+}k(j)^2 u(2j)^2 & \le 2\sum_{j\in S_+}k(j)^2 \big(u(2j)-2u(j)\big)^2 
+ 8\sum_{j\in S_+}k(j)^2 u(j)^2\\
& = 2\sum_{j\in S_+}k(j)^2 \big(u(j+j)-u(j)-u(j)\big)^2 
+ 8\sum_{j\in S_+}k(j)^2 u(j)^2\\
& \le 4\Gamma_2(u)(0)+4\Gamma_2(u)(0)=8 \Gamma_2(u)(0),
\end{align*}
by the representation formula for $\Gamma_2$ and the basic inequality \eqref{basicGamma2Est}.
Combining the last estimate and \eqref{powers2a} yields the asserted CD-inequality.
\end{proof}
\begin{bei} \label{examplepowers2}
{\em
All kernels $k$ of exponential type or mixed exponential and power type (see Corollary \ref{posexamples}, (ii) and (iii)) satisfy 
condition \eqref{S2}, whereas purely algebraic kernels (that is, kernels of power type as described in
\eqref{powerkerneldef})
do not possess this property.
}
\end{bei}

It turns out that a $CD(0,d)$-inequality with finite $d>0$ is still possible for larger gaps between the elements of the support.
The extreme case is described in the following result.
\begin{satz} \label{powersof3}
Assume that the support of $k$ is given by $S=\{\pm 3^l:\,l\in \iN_0\}$ and that $k$ is subject to the condition
\begin{equation} \label{S3}
\sum_{l\in S_+} \frac{k(3l)}{k(l)}\,<\infty.
\end{equation}
Then the corresponding operator $\cL$ satisfies $CD(0,d)$ for some finite $d>0$.
\end{satz}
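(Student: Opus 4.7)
The plan is to mimic the approach of Theorem~\ref{powersof2}, and the main obstacle is that for $j \in S_+$ one now has $2j \notin S$, so the diagonal contribution $(j,l)=(j,j)$ in the $\Gamma_2$-sum produces only $u(2j)-2u(j)$ rather than the quantity $u(3m)$ that we actually need. The saving feature is that $S_+$ is closed under multiplication by $3$: the mixed pair $(j,l)=(3m,-m)$ lies in $S \times S$ for every $m \in S_+$ and contributes $k(3m)k(m)\bigl(u(2m)-u(3m)-u(m)\bigr)^2$ to $\Gamma_2(u)(0)$. The crux of the argument will be an algebraic identity that reconstructs $u(3m)-u(m)$ from these mixed terms together with the diagonal ones.

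Invoking Lemma~\ref{lem:symmetry} I would first reduce to symmetric $u$ with $u(0)=0$. Since every element of $S_+$ different from $1$ is uniquely of the form $3m$ with $m \in S_+$, one has
\[
\cL u(0) = 2 k(1) u(1) + 2 \sum_{m \in S_+} k(3m) u(3m).
\]
Applying H\"older's inequality with the weights $k(3m)/k(m)$, using \eqref{S3} with $K := \sum_{l \in S_+} k(3l)/k(l) < \infty$, yields
\[
\Bigl(\sum_{m \in S_+} k(3m) u(3m)\Bigr)^{\!2} \leq K \sum_{m \in S_+} k(m) k(3m) u(3m)^2.
\]
The boundary term $2 k(1) u(1)$ will be absorbed by $\Gamma_2(u)(0)$ directly through the basic estimate \eqref{basicGamma2Est}, so the real task reduces to bounding the sum $\sum_m k(m) k(3m) u(3m)^2$ by $\Gamma_2(u)(0)$.

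For this central step I would combine $u(3m)^2 \leq 2(u(3m)-u(m))^2 + 2 u(m)^2$ with the identity
\[
u(3m) - u(m) = \bigl[u(2m) - 2 u(m)\bigr] - \bigl[u(2m) - u(3m) - u(m)\bigr].
\]
Each of the resulting three pieces is controllable by $\Gamma_2(u)(0)$: the $u(m)^2$-piece via \eqref{basicGamma2Est}, after noting that $k(3m)/k(m) \leq K$ for every $m \in S_+$ (each term of a convergent series of nonnegative numbers is bounded by the total sum); the $(u(2m)-2u(m))^2$-piece via the diagonal contributions $(j,l)=(\pm m, \pm m)$ in the representation \eqref{Gamma2Formel}, again using $k(3m)/k(m) \leq K$; and the $(u(2m)-u(3m)-u(m))^2$-piece directly via the mixed contributions $(j,l) = (\pm 3m, \mp m)$. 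Assembling these estimates gives $(\cL u(0))^2 \leq d\, \Gamma_2(u)(0)$ with an explicit $d$ depending only on $K$, which is the desired $CD(0,d)$.
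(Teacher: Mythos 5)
Your proposal is correct and takes essentially the same route as the paper: both reduce to symmetric $u$ with $u(0)=0$, apply H\"older with the weights $k(3m)/k(m)$ to pass to $\sum_m k(m)k(3m)u(3m)^2$, and decompose $u(3m)$ into the three pieces $u(2m)-u(3m)-u(m)$, $u(2m)-2u(m)$ and $u(m)$, each controlled by the mixed contribution at $(j,l)=(3m,-m)$, the diagonal contribution at $(j,l)=(m,m)$, and the basic estimate \eqref{basicGamma2Est} respectively. The only cosmetic difference is that you use two nested two-term square inequalities where the paper applies $(a+b+c)^2\le 3(a^2+b^2+c^2)$ directly, giving slightly different constants in the final $d$.
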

\begin{proof}
Assuming w.l.o.g.\ that $u(0)=0$ and $u(-j)=u(j)$ for all $j\in \iN$ we have analogously to the first part of the proof of Theorem
\ref{powersof2} that
\begin{align*}
\big(\cL(u)(0)\big)^2  
& \le 8 \big(\sum_{j\in S_+}k(3j)u(3j)\big)^2+4\Gamma_2(u)(0).
\end{align*}
Using \eqref{S3}, H\"older's
inequality gives
\begin{align*}
\big(\sum_{j\in S_+}k(3j)u(3j)\big)^2 & =\big(\sum_{j\in S_+}\sqrt{\frac{k(3j)}{k(j)}}\, \sqrt{k(3j) k(j)} u(3j)\big)^2 \\
& \le \sum_{j\in S_+}\,\frac{k(3j)}{k(j)}\, \sum_{j\in S_+}k(3j)k(j) u(3j)^2,
\end{align*}
and thus there is a constant $C>0$ depending only on $k$ such that
\begin{equation} \label{S3a}
\big(\cL(u)(0)\big)^2  \le C\big( \sum_{j\in S_+}k(3j)k(j) u(3j)^2+\Gamma_2(u)(0)\big).
\end{equation}
The key idea of the proof is the following decomposition and estimate of the term $u(3j)^2$,
where we use the fact that $(a+b+c)^2\le 3(a^2+b^2+c^2)$ for all $a,b,c\in \iR$.
\begin{align*}
u(3j)^2 & = \Big(\big(u(3j)-u(2j)+u(j)\big)+\big(u(2j)-2u(j)\big)+u(j)\Big)^2\\
& \le 3\big(u(3j)-u(2j)+u(j)\big)^2+3\big(u(2j)-2u(j)\big)^2+3 u(j)^2.
\end{align*}
From \eqref{S3} we obtain some constant $c_0 > 0$ such that $k(3j)\le c_0 k(j)$ for all $j\in S_+$. Using this comparability property and the symmetry of $u$ and $k$ this allows us to estimate
\begin{align*}
\sum_{j\in S_+}k(3j)k(j) u(3j)^2 & \le 3\sum_{j\in S_+}k(3j)k(-j)\big(u(3j-j)-u(3j)-u(-j)\big)^2 \\
& \quad\; +3c_0\sum_{j\in S_+}k(j)^2\big(u(2j)-2u(j)\big)^2 +3c_0\sum_{j\in S_+}k(j)^2 u(j)^2\\
& \le \big(6+6c_0+\frac{3}{2}\,c_0\big)\,\Gamma_2(u)(0).
\end{align*}
The last estimate and \eqref{S3a} yield the assertion.
\end{proof}
\begin{bei} \label{examplepowers3}
{\em
All kernels $k$ of exponential type or mixed exponential and power type (see Corollary \ref{posexamples}, (ii) and (iii)) satisfy 
condition \eqref{S3}, whereas purely algebraic kernels (that is, kernels of power type as described in
\eqref{powerkerneldef})
do not enjoy this property.
}
\end{bei}

Writing $S_+=\{x_l:\,l\in \iN\}$ where the sequence $x_l$ is strictly increasing, we have $x_{l+1}=3x_l$ for all $l\in \iN$
in the situation of Theorem \ref{powersof3}. This is an extreme case as the next result will imply that for {\em any} kernel
$k$ with 
\[
x_{l+1}\ge 3 x_l+1,\quad l\in \iN,
\]
the $CD(0,d)$-inequality fails for all finite $d>0$. 
\begin{satz} \label{thinsupport}
Let $k$ be a kernel with unbounded support $S$ such that the following conditions are satisfied.
\begin{itemize}
\item[(i)] There exists a number $N\in \iN_0$ such that for all $j,l\in S$ with $\max\{|j|,|l|\}>N$ we have $j+l\notin S$.
\item[(ii)] For every number $m\in \iN$ with $m>2N$ there exists at most one decomposition $m=j+l$ with
$j,l\in S$. 
\item[(iii)] For every number $m\in \iN$ with $m\le 2N$ there exist at most finitely many decompositions of the form $m=j+l$ with
$j,l\in S$. 
\end{itemize}
Then the operator $\cL$ associated with $k$ fails to satisfy $CD(0,d)$ for all finite $d>0$.
\end{satz}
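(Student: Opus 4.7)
The plan is to construct, for each $L \in \iN$, a symmetric admissible test function $u_L$ with $\mathcal{L}u_L(0)$ of order $L$ and $\Gamma_2(u_L)(0)$ of order at most $L$, so that $\Gamma_2(u_L)(0)/(\mathcal{L}u_L(0))^2 \to 0$ as $L \to \infty$, contradicting $CD(0,d)$ for every finite $d$. Writing $S_+ = \{x_1 < x_2 < \cdots\}$, set $\alpha_a := 1/k(x_a)$ for $a \leq L$ and $\alpha_a := 0$ otherwise. Define $u_L(0) = 0$ and $u_L(\pm x_a) = \alpha_a$ on $S$, and extend to $\iZ \setminus (S \cup \{0\})$ by \emph{enforcing additivity wherever possible}: for $|y| > 2N$, condition (ii) provides at most one unordered pair $\{j,l\} \subset S$ with $j+l = y$, and we set $u_L(y) := u_L(j)+u_L(l)$ when such a pair exists and $u_L(y) := 0$ otherwise; for $|y| \leq 2N$ with $y \notin S$ we simply put $u_L(y) := 0$. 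Condition (i) prevents any $y \in S$ with $|y| > 2N$ from being reached as $j+l$ with $(j,l) \in S^2$, so the extension does not conflict with the values prescribed on $S$. By evenness, $u_L$ is symmetric, and $\sum_j k(j)|u_L(j)| = 2L < \infty$ shows $u_L \in l_{1,k}(\iZ)$.

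A direct computation yields $\mathcal{L}u_L(0) = 2\sum_{a=1}^L k(x_a)\alpha_a = 2L$. For $\Gamma_2(u_L)(0)$ I would partition the double sum over $(j,l) \in S^2$ according to the value of $j+l$ into (A) $j+l = 0$, (B) $|j+l| > 2N$, and (C) $0 < |j+l| \leq 2N$. Case (A) gives $\sum_{j \in S} 4 k(j)^2 u_L(j)^2 = 8L$, as each of the $2L$ values $j \in \{\pm x_a : a \leq L\}$ contributes $4k(x_a)^2 \alpha_a^2 = 4$. Case (B) contributes $0$ by construction, thanks to the additivity $u_L(j+l) = u_L(j)+u_L(l)$ secured by (ii). For case (C), condition (iii) together with the finiteness of $\{m \in \iZ : 0 < |m| \leq 2N\}$ ensures that the set of pairs in question is finite; for $L$ larger than a threshold depending only on $k$ and $N$, every index occurring is $\leq L$, so the values $u_L(j), u_L(l), u_L(j+l)$ depend only on $k$, and case (C) as a whole contributes a constant $C = C(k,N)$ independent of $L$.

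Combining the three cases, $4\Gamma_2(u_L)(0) \leq 8L + C$ while $(\mathcal{L}u_L(0))^2 = 4L^2$, so
\[
\frac{\Gamma_2(u_L)(0)}{(\mathcal{L}u_L(0))^2} \leq \frac{2L + C/4}{4L^2} \longrightarrow 0 \qquad (L \to \infty),
\]
ruling out $CD(0,d)$ for every finite $d > 0$. The main difficulty is orchestrating the three sparsity conditions simultaneously: (i) makes the additive off-$S$ extension of $u_L$ compatible with the values prescribed on $S$; (ii) guarantees single-valuedness of this extension in the dominant region $|y| > 2N$; and (iii) prevents an uncontrolled accumulation from the residual region $|y| \leq 2N$. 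The calibration $\alpha_a = 1/k(x_a)$ is chosen so that each prescribed value contributes $k(x_a)\alpha_a = 1$ to $\mathcal{L}u_L$, giving $(\mathcal{L}u_L)^2 \sim L^2$ against $\Gamma_2 \lesssim L$, which produces the decisive factor $1/L$ in the final ratio.
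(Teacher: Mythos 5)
Your proof is correct and follows essentially the same approach as the paper: you build the same type of test function (value $1/k(j)$ on the first $L$ support points, extended additively via the uniqueness from (ii), with (i) guaranteeing consistency and (iii) controlling the residual region $|j+l|\leq 2N$), and obtain $\Gamma_2(u_L)(0)\lesssim L$ against $(\mathcal{L}u_L(0))^2\sim L^2$. The only cosmetic difference is that the paper zeroes out $u$ on $S_+\cap[1,2N]$ and indexes the cutoff by the numerical threshold $n$ rather than the count $L$; both variants lead to the same conclusion.
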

\begin{proof}
Letting $n> 2N$ we define the function $u_n:\iZ\to \iR$ as follows. We set $u_n(j)=0$ for every $j\in \iN_0$ with $j\le 2N$. 
If $j>2N$ we distinguish several cases. Suppose first that $j\in S_+$. Then we set $u_n(j)=\frac{1}{k(j)}$ whenever
$j\le n$ and otherwise $u_n(j)=0$. Now suppose that $j\notin S_+$ and $j=l_1+l_2$
with (unique) $l_1,l_2\in S$. In this case we set $u_n(j)=u_n(l_1)+u_n(l_2)$. For all remaining $j>2N$ we set $u_n(j)=0$.
Finally, we put $u_n(-j)=u_n(j)$ for all $j\in\iN$. Observe that $u_n$ is well defined, in view of the assumptions (i) and (ii). 

We have now
\[
\cL(u_n)(0)=2\sum_{j=2N+1}^{n}k(j)u_n(j)=2\sum_{j=2N+1}^{n} \chi_{S_+}(j),
\]
where $\chi_{S_+}$ denotes the characteristic function of the set $S_+$.

On the other hand,
\begin{align*}
4\Gamma_2(u_n)(0) & =\sum_{j,l\in S}k(j)k(l)\big(u_n(j+l)-u_n(j)-u_n(l)\big)^2\\
& = 4 \sum_{j\in S}k(j)^2 u_n(j)^2
 +\sum_{j,l\in S, l\neq -j}k(j)k(l)\big(u_n(j+l)-u_n(j)-u_n(l)\big)^2\\
 & = 8 \sum_{j=2N+1}^{n} \chi_{S_+}(j)+\sum_{j,l\in S, 1\le |j+l|\le 2N}k(j)k(l)\big(u_n(j+l)-u_n(j)-u_n(l)\big)^2\\
 & \quad +\sum_{j,l\in S, |j+l|> 2N}k(j)k(l)\big(u_n(j+l)-u_n(j)-u_n(l)\big)^2.
\end{align*}
The last term is equal to zero, by construction of $u_n$, and the second to the last one is bounded from above by a constant $M$
which is independent of $n$, by assumption (iii). 
Since the support of $k$ is unbounded, we have $\xi_n:=\sum_{j=2N+1}^{n} \chi_{S_+}(j)\to \infty$ as $n\to \infty$ 
and thus it follows that
\[
\limsup_{n\to \infty}\frac{4\Gamma_2(u_n)(0)}{\big(\cL(u_n)(0)\big)^2}\le 
\limsup_{n\to \infty}\frac{8\xi_n+M}{4\xi_n^2} =0,
\]
which proves the theorem.
\end{proof}

\begin{bemerk1}
\label{rem:infiniteproduct}
In the situation of Theorem \ref{thinsupport} with $N=0$, the underlying graph $\mathcal G$ (described in the introduction after \eqref{Gamma2Formel}) has a simple product structure. More precisely, the first and second neighbourhood of any vertex $x \in \mathbb{Z}$ are isomorphic to the first and second neighbourhood, respectively, to $(0,0,\ldots)$ in the infinite Cartesian product $\bigtimes_{j \in S_+} k(j) \mathbb{Z}$. Note that, by definition, $CD(\kappa,d)$ at $x$ only depends on the first and second neighbourhood of $x$.
\end{bemerk1}

\section{Is a positive curvature possible?}
In this section we show that $CD(\kappa,\infty)$ with $\kappa>0$ does not hold for arbitrary kernels $k$ provided that
$k$ is non-increasing on $\iN$.
\begin{satz}\label{theo:nocurvature}
Suppose that the kernel $k$ is non-increasing on $\iN$. Then the corresponding operator $\cL$ fails to satisfy
the $CD(\kappa,\infty)$-condition for all $\kappa>0$. This is even true when the class of admissible functions
is restricted to compactly supported functions.
\end{satz}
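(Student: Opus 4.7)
The plan is to disprove $CD(\kappa,\infty)$ for every $\kappa>0$ by constructing a sequence $(u_N)_{N\in\iN}$ of compactly supported functions such that $\Gamma_2(u_N)(0)/\Gamma(u_N)(0)\to 0$; by Lemma \ref{lem:symmetry} we may work at $x=0$. The choice of $u_N$ is dictated by the heuristic from the introduction: the (generally inadmissible) identity $v(j)=j$ satisfies $\Gamma_2(v)(0)=0$, because $(j+l)-j-l=0$, while $\Gamma(v)(0)=\sum_{j\geq 1}k(j)j^2\in(0,\infty]$. I therefore take the truncation
\[
u_N(j)=j\,\chi_{\{|j|\leq N\}},\qquad j\in\iZ,
\]
which is compactly supported, odd, vanishes at $0$, and coincides with $v$ on $[-N,N]$.

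The quantity $\Gamma(u_N)(0)=\sum_{j=1}^N k(j)j^2$ increases to $T:=\sum_{j\geq 1}k(j)j^2\in(0,\infty]$, so two regimes appear. If $T<\infty$ the argument is immediate: using $(a-b-c)^2\leq 3(a^2+b^2+c^2)$ and $|u_N(m)|\leq|m|$, the integrand in the double sum defining $4\Gamma_2(u_N)(0)$ is dominated by the $k\otimes k$-summable majorant $C(j^2+l^2)$, and it vanishes at every fixed $(j,l)$ as soon as $N\geq\max\{|j|,|l|,|j+l|\}$; dominated convergence yields $\Gamma_2(u_N)(0)\to 0$, while $\Gamma(u_N)(0)\to T>0$. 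This subcase does not use monotonicity of $k$.

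The substantial subcase is $T=+\infty$, where $T_N:=\Gamma(u_N)(0)\to\infty$ and I must establish the sharper statement $\Gamma_2(u_N)(0)=o(T_N)$. I would split the sum for $\Gamma_2(u_N)(0)$ into regions determined by the signs of $j,l$ and by which of $|j|,|l|,|j+l|$ exceed $N$; the integrand vanishes on the central region where all three are $\leq N$ (since $u_N$ is affine there). The quantitative engine is provided by two consequences of the hypotheses: \textbf{(i)} monotonicity gives $T_N\geq k(N)\sum_{j=1}^N j^2\geq\tfrac{1}{3}N^3 k(N)$, so that $N^2 k(N)\leq 3T_N/N$; and \textbf{(ii)} $k\in l^1$ monotone forces $\varepsilon_N:=\sum_{m>N}k(m)\to 0$, and a short telescoping of the inequality $T_N\leq T_{\lfloor N/2\rfloor}+N^2\varepsilon_{\lfloor N/2\rfloor}$ then yields $T_N/N^2\to 0$, whence $\sqrt{T_N}/N\to 0$. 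Combined with the Cauchy--Schwarz estimate $\sum_{m=1}^N m\,k(m)\leq\sqrt{|k|_1\,T_N}$, each boundary region contributes a quantity $o(T_N)$: in regions where exactly one of $|j|,|l|,|j+l|$ exceeds $N$, the decay factor $\sqrt{T_N}/N$ appears; in regions where two of the three exceed $N$, the tail $\varepsilon_N$ supplies the decay.

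The main obstacle is the infinite-second-moment subcase. Naive pointwise bounds on $(u_N(j+l)-u_N(j)-u_N(l))^2$ applied globally yield only $O(T_N)$, so the improvement to $o(T_N)$ hinges on restricting to the boundary region where the integrand is nonzero and then deploying the monotonicity-based estimates (i) and (ii). The argument becomes technical because the number of subregions (indexed by the signs of $j,l$ and by which of $|j|,|l|,|j+l|$ is the ``outlier'' above $N$) requires case-by-case analysis, even though each individual case reduces to the elementary ingredients above.
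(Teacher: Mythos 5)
Your proposal is correct in strategy and takes a genuinely different route from the paper's. The paper replaces the identity function by $u_N(j)=j\varphi_N(j)$ with a \emph{linearly interpolated} cutoff $\varphi_N$ that equals $1$ on $[-N,N]$, decreases linearly to $0$ on $[N,T_N]$ (with $T_N-N\gtrsim T_N$), and vanishes beyond. The mechanism there is the Lipschitz bound $|\varphi_N(j)-\varphi_N(l)|\le |j-l|/T_N$, which inserts a factor $1/T_N^2$ in every boundary term; monotonicity of $k$ is then used only locally (to replace $k(j+l)$ by $k(j)$ in a few sums). You instead take the \emph{sharp} cutoff $u_N(j)=j\chi_{\{|j|\le N\}}$ (i.e.\ the degenerate case $T_N=N$, which the paper's framework explicitly excludes), and compensate for the lack of Lipschitz regularity by the two quantitative consequences you isolate: (i) $N^2k(N)\le 3\Gamma(u_N)(0)/N$ from monotonicity, and (ii) $\Gamma(u_N)(0)/N^2\to 0$ from $k\in l^1$ via your telescoping argument (which indeed shows $\sqrt{T_N}/N\to 0$, treating the geometric sum $\sum 4^{-j}\varepsilon_{N/2^{j+1}}$ by a dominated-convergence-type bound). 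Your splitting into a finite-second-moment case (immediate by dominated convergence, without monotonicity) and an infinite-second-moment case is also a structural departure; the paper handles both uniformly. I checked the boundary regions and the estimates do close: in the region $j,l\le N<j+l$ the contribution is bounded by a constant times $T_N^{3/2}/N+T_N\varepsilon_{N/2}$ after writing $\sum_{m<N/2}\varepsilon_m\le\sum_{l\le N/2}lk(l)+\tfrac N2\varepsilon_{N/2}\le\sqrt{|k|_1T_N}+\tfrac N2\varepsilon_{N/2}$ and using (i); in regions with $|j|>N$, $|j+l|\le N$ one bounds by $N^2k(N)\sum_l lk(l)\lesssim T_N^{3/2}/N$; regions where two of $|j|,|l|,|j+l|$ exceed $N$ produce a factor $\varepsilon_N$. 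So your blanket description (``$\sqrt{T_N}/N$ for one exceedance, $\varepsilon_N$ for two'') is essentially accurate, though in the first kind of region both decay factors actually appear. Be aware that the proposal as written is a sketch: the case analysis is not carried out, and that is where the real work lies (the paper spends most of the proof on exactly this). Also, a small misattribution: $\varepsilon_N\to 0$ is just the tail of a convergent series and needs no monotonicity. Overall, a sound alternative with a somewhat more canonical cutoff, at the price of needing the two a priori estimates (i) and (ii).
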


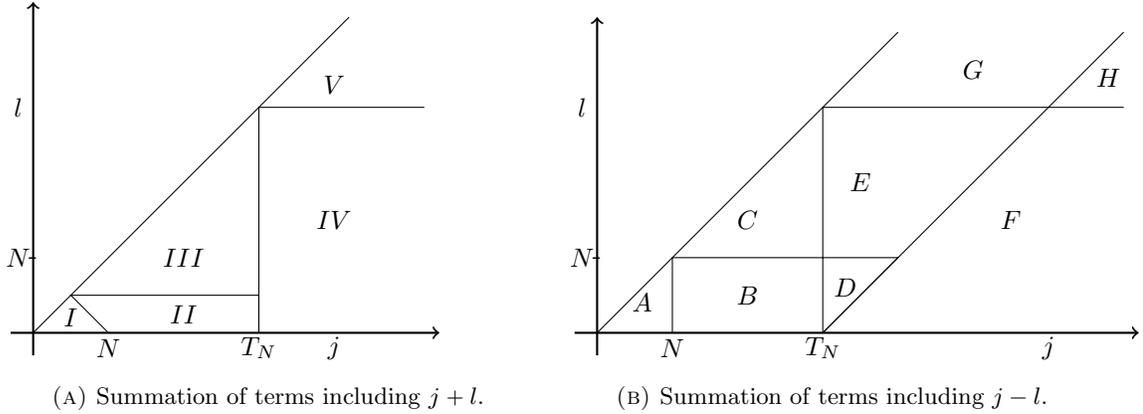
\begin{figure}[ht]
    \centering
    \begin{subfigure}[b]{0.49\textwidth}
     \begin{tikzpicture}[scale = 1]
 \draw (0,0) -- (4.2,4.2);
  \draw[->,thick] (0,-0.3) -- (0,4.4);
 \draw[->, thick] (-0.3,0) -- (5.4,0);
  \node (j) at (-0.2, 3) {$l$};
 \node (l) at (4, -0.2) {$j$};
 \node (n3) at (1,-0.2) {$N$};
 \node (n4) at (3,-0.2) {$T_N$};
 \node at (-0.2,1) {$N$};
   \node  at (0,1) {-};

\draw (1,0) -- (0.5,0.5) -- (3,0.5)--(3,0)--(3,3)--(5.2,3);

\node at (0.5,0.2) {$I$};
\node at (2,0.25) {$II$};
\node at (2,1) {$III$};
\node at (4,1.5) {$IV$};
\node at (4,3.3) {$V$};

   \end{tikzpicture}
 \caption{Summation of terms including $j+l$.}
  \label{fig:domain2a}
    \end{subfigure}
    \hfill
    \begin{subfigure}[b]{0.49\textwidth}
    \begin{tikzpicture}[scale = 1]
 \draw (0,0) -- (4,4);
  \draw[->,thick] (0,-0.3) -- (0,4.2);
 \draw[->, thick] (-0.3,0) -- (7.2,0);
  \node (j) at (-0.2, 3) {$l$};
 \node (l) at (6, -0.2) {$j$};
 \node (n3) at (1,-0.2) {$N$};
 \node (n4) at (3,-0.2) {$T_N$};
 \node at (-0.2,1) {$N$};
   \node  at (0,1) {-};

\draw (1,0) -- (1,1) -- (4,1)--(3,0)--(3,3)--(7,3);
\draw (3,0) -- (7,4);

\node at (0.6,0.4) {$A$};
\node at (2,0.5) {$B$};
\node at (2,1.5) {$C$};
\node at (3.3,0.6) {$D$};
\node at (3.5,2) {$E$};
\node at (5.5,1.5) {$F$};
\node at (5,3.5) {$G$};
\node at (6.8,3.4) {$H$};

   \end{tikzpicture}
 \caption{Summation of terms including $j-l$.}
  \label{fig:domain1a}
    \end{subfigure}
\caption{Splitting the domain of summation.}
\label{fig:3}
\end{figure}

\begin{proof}
Note that for kernels with finite second moment we immediately obtain the claim for the admissible, but non-compactly supported function $u(j) = j$. To show the desired result for all non-increasing kernels we use the following cut-off procedure.
Let $\left(T_N\right)_{N \in \mathbb{N}} $ be an increasing sequence of positive real numbers such that
$T_N\to \infty$ and $0 < C \leq \frac{T_N - N}{T_N} $ holds for some constant $C$ and all $N\geq 1$. 
We define $u_N : \mathbb{Z}\to \mathbb{R}$ by $u_N(j)= j \varphi_N(j)$, $j \in \mathbb{Z}$, where $\varphi_N:\mathbb{Z}\to \mathbb{R}$ is symmetric and such that $\varphi_N(j) = 1$ for $\vert j \vert \leq N$, 
$\varphi_N(j)= \frac{T_N-j}{T_N - N}$ if $N+1 \leq j \leq T_N$ and $\varphi_N(j)=0$ for $j > T_N$. For the sake of convenience we assume that $N$ is even in order to ensure $\frac{N}{2} \in \mathbb{N}$. In what follows we also drop the index $N$ in the notation of the
cut-off function, that is we just write $\varphi$ instead of $\varphi_N$.
Observe that the definition of $\varphi$ implies that $\vert \varphi(j) - \varphi(l)\vert \leq \frac{1}{T_N}|j-l|$, for $j,l \in \mathbb{Z}$. 

On one hand we have
\begin{equation}\label{eq:Gammaun}
\Gamma (u_N) (0) = 2 \sum\limits_{l=1}^{T_N} k(l) l^2 \varphi(l)^2 = 2 \left( \sum\limits_{l=1}^N k(l)l^2 + \sum\limits_{l=N+1}^{T_N} k(l)l^2 \varphi(l)^2 \right).
\end{equation}
On the other hand, due to $u_N$ being anti-symmetric, we have 
\begin{align*}
\Gamma_2(u_N)(0) \leq 2 \sum\limits_{j=1}^\infty \sum\limits_{l=1}^j k(j)k(l)\left[ \left( u_N(j+l)-u_N(j)-u_N(l)\right)^2 + \left(u_N(j-l) - u_N(j) + u_N(l) \right)^2\right].
\end{align*}
In order to estimate the $\Gamma_2$-term, we distinguish several cases (see Figure \ref{fig:3}). The strategy in each of the following cases will be the same. In fact, we will always estimate the respective 
part of the $\Gamma_2$-sum by some expression occurring in \eqref{eq:Gammaun} multiplied by some positive sequence converging to zero as $N\to \infty$. This implies that  for any given constant $\kappa>0$ there exists $N(\kappa) \in \mathbb{N}$ such that
\begin{equation*}
0<\Gamma_2 (u_N)(0) \le \kappa \Gamma (u_N)(0)
\end{equation*}
holds for any $N \geq N(\kappa)$, which will be sufficient to establish the claim. To simplify the following presentation, we 
use the notation $\delta(N)$ for a positive sequence, which may differ from line to line, such that $\delta(N)\to 0$ as $N\to \infty$.

Note that we can write for $j,l \in \mathbb{Z}$
\begin{equation}\label{eq:bracketforidentity}
\left( u_N(j+l) - u_N(j) - u_N(l) \right)^2 = \big( j \left( \varphi(j+l) - \varphi(j) \right) + l \left( \varphi(j+l) - \varphi(l) \right)\big)^2.
\end{equation}
\underline{I: $1\leq l \leq j$, $j+l \leq N$:} Here we have $\varphi(j+l)= \varphi(j)= \varphi(l) = 1$ and therefore all the 
corresponding summands are vanishing due to \eqref{eq:bracketforidentity}.\\
\underline{II: $1 \leq l \leq \frac{N}{2}$, $N +1 \leq j+l$, $\frac{N}{2}+1\leq j \leq T_N$:} Due to our choice of $\varphi$, we can control the bracket in \eqref{eq:bracketforidentity} by $\frac{j^2 l^2}{T_N^2}$ multiplied by some positive constant, and thus the part of $\Gamma_2(u_N)(0)$ that corresponds to the present case can be controlled by the sums
\begin{equation}\label{eq:calculationcaseII}
\sum\limits_{l=1}^N k(l) l^2 \frac{1}{T_N^2}\sum\limits_{j= \frac{N}{2}}^{T_N}  k(j) j^2 \leq \sum\limits_{l=1}^N k(l) l^2 \sum\limits_{j=\frac{N}{2}}^{T_N} k(j)\le \delta(N)\Gamma (u_N) (0) ,
\end{equation}
since $k \in l_1(\mathbb{Z})$.\\
\underline{III: $\frac{N}{2}+1 \leq l \leq j \leq T_N$:} We use the inequality $\left( a - b -c \right)^2 \leq 3 \left(a^2 + b^2 +c^2\right) $, $a,b,c \in \mathbb{R}$, and estimate the squares resulting from the left-hand side of \eqref{eq:bracketforidentity} and the previous inequality separately. We have
\begin{align*}
\sum\limits_{j=\frac{N}{2}+1}^{T_N} k(j) j^2 \varphi(j)^2 \sum\limits_{l=\frac{N}{2}+1}^j k(l) \leq \sum\limits_{j=\frac{N}{2}+1}^{T_N} k(j) j^2 \varphi(j)^2 \sum\limits_{l=\frac{N}{2}+1}^\infty k(l) \le \delta(N)\Gamma (u_N) (0).
\end{align*}
Similarly,
\begin{align*}
\sum\limits_{l=\frac{N}{2}+1}^{T_N} k(l) l^2 \varphi(l)^2 \sum\limits_{j=l}^{T_N} k(j)
 \leq \sum\limits_{l=\frac{N}{2}+1}^{T_N} k(l) l^2 \varphi(l)^2 \sum\limits_{j=\frac{N}{2}+1}^{T_N} k(j)
  \le \delta(N)\Gamma (u_N) (0).
\end{align*}
Since $(j+l)^2 \leq (2j)^2$ and $\varphi(j+l) \leq \varphi(j)$, by using the first estimate, we also obtain the desired estimate for the remaining third term. \\
\underline{IV: $1 \leq l \leq T_N +1$, $j\geq T_N$:} We have $\varphi(j)= \varphi(j+l)=0$ and thus it remains to estimate
\begin{align*}
\sum\limits_{l=1}^{T_N} k(l) l^2 \varphi(l)^2 \sum\limits_{j=T_N+1}^\infty k(j) \le \delta(N)\Gamma (u_N) (0). 
\end{align*}
\\
\underline{V: $T_N+1 \leq l \leq j$:} Now $\varphi(l)= \varphi(j)= \varphi(j+l) = 0$ and therefore the estimate is trivial.\\
\underline{A: $1\leq l \leq j \leq N$:} This case follows for the same reason as I.\\
\underline{B: $1 \leq l \leq N$, $N+1 \leq j \leq T_N$:} We now write
\begin{align*}
\left( u_N(j-l) - u_N(j) + u_N(l) \right)^2 = \big( j \left( \varphi(j-l) - \varphi(j)\right) + l \left( \varphi(l) - \varphi(j-l) \right) \big)^2
\end{align*}
and note that we can, as in the case II, control this expression by $\frac{j^2 l^2}{T_N^2}$ multiplied by some positive constant. This yields the same expression as in \eqref{eq:calculationcaseII} with the lower limit of the inner
sum replaced by $N+1$. Hence we get the desired estimate. \\
\underline{C: $N+1 \leq l \leq j , j \leq T_N$:} As in III we estimate the single terms, starting with
\begin{align*}
\sum\limits_{l=N+1}^{T_N} k(l) \sum\limits_{j=l}^{T_N}  & k(j) \left(j-l\right)^2 \varphi(j-l)^2  = \sum\limits_{l=N+1}^{T_N} k(l)\sum\limits_{j=1}^{T_N - l} k(j+l) j^2 \varphi(j)^2 \\
& \leq \sum\limits_{l=N+1}^{T_N} k(l) \sum\limits_{j=1}^{T_N} k(j) j^2 \varphi(j)^2 \le \delta(N)\Gamma (u_N) (0),
\end{align*}
where we applied that $k$ is monotone. Next, we estimate
\begin{align*}
\sum\limits_{j=N}^{T_N} k(j) j^2 \varphi(j)^2 \sum\limits_{l=N}^j k(l) \leq \sum\limits_{j=N}^{T_N} k(j) j^2 \varphi(j)^2 \sum\limits_{l=N}^{T_N} k(l)\le \delta(N)\Gamma (u_N) (0)
\end{align*}
and \begin{align*}
\sum\limits_{l=N}^{T_N} k(l) l^2 \varphi(l)^2 \sum\limits_{j=l}^{T_N}k(j) \leq \sum\limits_{l=N}^{T_N} k(l)l^2 \varphi(l)^2 \sum\limits_{j=N}^{T_N} k(j)\le \delta(N)\Gamma (u_N) (0).
\end{align*}
\\
\underline{D: $ T_N +1 \leq j$, $j-T_N \leq l \leq N$:} We have $\varphi(j)=0$ and hence there are two terms left to consider. First, obtain that
\begin{align*}
\sum\limits_{j=T_N+1}^{T_N+N} k(j) \sum\limits_{l=j-T_N}^{N} & k(l) \left(j-l\right)^2 \varphi(j-l)^2 = \sum\limits_{j=T_N + 1}^{T_N + N}k(j) \sum\limits_{l= - T_N}^{N-j} k(j+l) l^2 \varphi(l)^2\\
&= \sum\limits_{j=T_N+1}^{T_N + N} k(j) \sum\limits_{l=j-N}^{T_N} k(j-l) l^2 \varphi(l)^2 \\
&\leq \sum\limits_{j=T_N+1}^{T_N + N} k(j) \sum\limits_{l=j-N}^{T_N} k(l) l^2 \varphi(l)^2
\le \delta(N)\Gamma (u_N) (0) ,
\end{align*}
since in this case $j-l \ge l$ holds. In addition, since $\phi(l) = 1$ we have the upper bound
\begin{align*}
\sum\limits_{j=T_N + 1}^{T_N + N} k(j) \sum\limits_{l=j- T_N}^N k(l) l^2 \leq \sum\limits_{j=T_N + 1}^{T_N + N} k(j) \sum\limits_{l=1}^N k(l) l^2\le \delta(N)\Gamma (u_N) (0).
\end{align*}
\\
\underline{E: $N+1 \leq l \leq T_N$, $T_N + 1 \leq j \leq T_N + l$:} Again, we have $\varphi(j)=0$. On the one hand it holds
\begin{align*}
\sum\limits_{l=N+1}^{T_N} k(l) \sum\limits_{j=T_N + 1}^{T_N+l} & k(j) \left( j -l \right)^2 \varphi(j-l)^2 = \sum\limits_{l=N+1}^{T_N} k(l) \sum\limits_{j=T_N - l}^{T_N} k(j+l) j^2 \varphi(j)^2 \\
&\leq \sum\limits_{l=N+1}^{T_N} k(l) \sum\limits_{j=1}^{T_N} k(j) j^2 \varphi(j)^2\le \delta(N)\Gamma (u_N) (0)
\end{align*}
and on the other hand
\begin{align*}
\sum\limits_{l=N+1}^{T_N} k(l) l^2 \varphi(l)^2 \sum\limits_{j=T_N + 1}^{T_N + l} k(j) \leq \sum\limits_{l=N+1}^{T_N} k(l) l^2 \varphi(l)^2 \sum\limits_{j=T_N +1}^{2 \;T_N} k(j)\le \delta(N)\Gamma (u_N) (0).
\end{align*}
\\
\underline{F: $1\leq l \leq T_N$, $T_N + l +1 \leq j $:} Here we have $\varphi(j) = \varphi(j-l) = 0$. It remains to consider
\begin{align*}
\sum\limits_{l=1}^{T_N}k(l)l^2 \varphi(l)^2 \sum\limits_{j=T_N + l}^\infty k(j) \leq \sum\limits_{l=1}^{T_N} k(l) l^2 \varphi(l)^2 \sum\limits_{j=T_N}^\infty k(j)\le \delta(N)\Gamma (u_N) (0).
\end{align*}
\\
\underline{G: $T_N+1 \leq l$, $l \leq j \leq T_N + l$:} In this case $\varphi(j)$ and $\varphi(l)$ vanish. Hence, we only need to consider
\begin{align*}
\sum\limits_{l=T_N + 1}^\infty k(l) \sum\limits_{j=l}^{T_N + l}& k(j) \left( j - l \right)^2 \varphi(j-l)^2 = \sum\limits_{l=T_N + 1}^\infty k(l) \sum\limits_{j=1}^{T_N} k(j+l) j^2 \varphi(j)^2 \\
&\leq  \sum\limits_{l=T_N + 1}^\infty k(l) \sum\limits_{j=1}^{T_N} k(j) j^2 \varphi(j)^2\le \delta(N)\Gamma (u_N) (0).
\end{align*}
\\
\underline{H: $T_N+1 \leq l$, $j\leq l + T_N + 1$:} This case is obvious, since $\varphi(j-l) = \varphi(l) = \varphi(j) = 0$.
\end{proof}
\begin{bemerk1}
 {\em A careful inspection of the proof shows that the monotonicity assumption on $k$ can be weakened by assuming that there exists some $j_0 \in \N$, such that $k$ 
non-increasing on $\mathbb{N}\setminus\{1,...,j_0\}$. }
\end{bemerk1}

\newcommand{\etalchar}[1]{$^{#1}$}

%
%
%


\begin{thebibliography}{KKRT16}

\bibitem[BCLL17]{MR3611318}
Frank Bauer, Fan Chung, Yong Lin, and Yuan Liu.
\newblock Curvature aspects of graphs.
\newblock {\em Proc. Amer. Math. Soc.}, 145(5):2033--2042, 2017.

\bibitem[BE85]{MR889476}
Dominique Bakry and Michel \'Emery.
\newblock Diffusions hypercontractives.
\newblock In {\em S\'eminaire de probabilit\'es, {XIX}, 1983/84}, volume 1123
  of {\em Lecture Notes in Math.}, pages 177--206. Springer, Berlin, 1985.

\bibitem[BGL14]{MR3155209}
Dominique Bakry, Ivan Gentil, and Michel Ledoux.
\newblock {\em Analysis and geometry of {M}arkov diffusion operators}, volume
  348 of {\em Grundlehren der Mathematischen Wissenschaften [Fundamental
  Principles of Mathematical Sciences]}.
\newblock Springer, Cham, 2014.

\bibitem[BHL{\etalchar{+}}15]{MR3316971}
Frank Bauer, Paul Horn, Yong Lin, Gabor Lippner, Dan Mangoubi, and Shing-Tung
  Yau.
\newblock Li-{Y}au inequality on graphs.
\newblock {\em J. Differential Geom.}, 99(3):359--405, 2015.

\bibitem[CRS{\etalchar{+}}18]{MR3787555}
\'{O}scar Ciaurri, Luz Roncal, Pablo~Ra\'{u}l Stinga, Jos\'{e}~L. Torrea, and
  Juan~Luis Varona.
\newblock Nonlocal discrete diffusion equations and the fractional discrete
  {L}aplacian, regularity and applications.
\newblock {\em Adv. Math.}, 330:688--738, 2018.

\bibitem[DKZ17]{2017arXiv170104807D}
Dominik {Dier}, Moritz {Kassmann}, and Rico {Zacher}.
\newblock {Discrete versions of the Li-Yau gradient estimate}.
\newblock {\em ArXiv e-prints}, arXiv:1701.04807, January 2017.

\bibitem[EF18]{MR3782987}
Matthias Erbar and Max Fathi.
\newblock Poincar\'{e}, modified logarithmic {S}obolev and isoperimetric
  inequalities for {M}arkov chains with non-negative {R}icci curvature.
\newblock {\em J. Funct. Anal.}, 274(11):3056--3089, 2018.

\bibitem[EKS15]{MR3385639}
Matthias Erbar, Kazumasa Kuwada, and Karl-Theodor Sturm.
\newblock On the equivalence of the entropic curvature-dimension condition and
  {B}ochner's inequality on metric measure spaces.
\newblock {\em Invent. Math.}, 201(3):993--1071, 2015.

\bibitem[EM12]{MR2989449}
Matthias Erbar and Jan Maas.
\newblock Ricci curvature of finite {M}arkov chains via convexity of the
  entropy.
\newblock {\em Arch. Ration. Mech. Anal.}, 206(3):997--1038, 2012.

\bibitem[Erb14]{MR3224294}
Matthias Erbar.
\newblock Gradient flows of the entropy for jump processes.
\newblock {\em Ann. Inst. Henri Poincar\'{e} Probab. Stat.}, 50(3):920--945,
  2014.

\bibitem[FS18]{MR3706773}
Max Fathi and Yan Shu.
\newblock Curvature and transport inequalities for {M}arkov chains in discrete
  spaces.
\newblock {\em Bernoulli}, 24(1):672--698, 2018.

\bibitem[JL14]{MR3164168}
J\"urgen Jost and Shiping Liu.
\newblock Ollivier's {R}icci curvature, local clustering and
  curvature-dimension inequalities on graphs.
\newblock {\em Discrete Comput. Geom.}, 51(2):300--322, 2014.

\bibitem[KKRT16]{MR3492631}
Bo'az Klartag, Gady Kozma, Peter Ralli, and Prasad Tetali.
\newblock Discrete curvature and abelian groups.
\newblock {\em Canad. J. Math.}, 68(3):655--674, 2016.

\bibitem[KM18]{2018arXiv180710181K}
Matthias {Keller} and Florentin {M{\"u}nch}.
\newblock {Gradient estimates, Bakry-Emery Ricci curvature and ellipticity for
  unbounded graph Laplacians}.
\newblock {\em arXiv e-prints}, page arXiv:1807.10181, Jul 2018.

\bibitem[Li12]{MR2962229}
Peter Li.
\newblock {\em Geometric analysis}, volume 134 of {\em Cambridge Studies in
  Advanced Mathematics}.
\newblock Cambridge University Press, Cambridge, 2012.

\bibitem[LMP18]{MR3776357}
Shiping Liu, Florentin M{\"{u}}nch, and Norbert Peyerimhoff.
\newblock Bakry-\'{E}mery curvature and diameter bounds on graphs.
\newblock {\em Calc. Var. Partial Differential Equations}, 57(2):Art. 67, 9,
  2018.

\bibitem[LV09]{MR2480619}
John Lott and C\'edric Villani.
\newblock Ricci curvature for metric-measure spaces via optimal transport.
\newblock {\em Ann. of Math. (2)}, 169(3):903--991, 2009.

\bibitem[LY10]{MR2644381}
Yong Lin and Shing-Tung Yau.
\newblock Ricci curvature and eigenvalue estimate on locally finite graphs.
\newblock {\em Math. Res. Lett.}, 17(2):343--356, 2010.

\bibitem[Maa11]{MR2824578}
Jan Maas.
\newblock Gradient flows of the entropy for finite {M}arkov chains.
\newblock {\em J. Funct. Anal.}, 261(8):2250--2292, 2011.

\bibitem[M{\"{u}}n14]{2014arXiv1412.3340M}
Florentin M{\"{u}}nch.
\newblock {Li-Yau inequality on finite graphs via non-linear curvature
  dimension conditions}.
\newblock {\em arXiv e-prints}, arXiv:1412.3340, Dec 2014.

\bibitem[M{\"{u}}n17]{MR3592766}
Florentin M{\"{u}}nch.
\newblock Remarks on curvature dimension conditions on graphs.
\newblock {\em Calc. Var. Partial Differential Equations}, 56(1):Art. 11, 8,
  2017.

\bibitem[NR17]{MR3727131}
Laurent Najman and Pascal Romon, editors.
\newblock {\em Modern approaches to discrete curvature}, volume 2184 of {\em
  Lecture Notes in Mathematics}.
\newblock Springer, Cham, 2017.

\bibitem[Oll09]{MR2484937}
Yann Ollivier.
\newblock Ricci curvature of {M}arkov chains on metric spaces.
\newblock {\em J. Funct. Anal.}, 256(3):810--864, 2009.

\bibitem[Stu06a]{MR2237206}
Karl-Theodor Sturm.
\newblock On the geometry of metric measure spaces. {I}.
\newblock {\em Acta Math.}, 196(1):65--131, 2006.

\bibitem[Stu06b]{MR2237207}
Karl-Theodor Sturm.
\newblock On the geometry of metric measure spaces. {II}.
\newblock {\em Acta Math.}, 196(1):133--177, 2006.

\bibitem[SWZ19]{Gegenbeispiel}
Adrian Spener, Frederic Weber, and Rico Zacher.
\newblock The fractional Laplacian has infinite Dimension.
\newblock {\em ArXiv e-prints}, arXiv:1903.00521, March 2019.

\bibitem[Vil09]{MR2459454}
C\'{e}dric Villani.
\newblock {\em Optimal transport}, volume 338 of {\em Grundlehren der
  Mathematischen Wissenschaften [Fundamental Principles of Mathematical
  Sciences]}.
\newblock Springer-Verlag, Berlin, 2009.
\newblock Old and new.

\end{thebibliography}
\end{document}